\newcommand{\mmp}{\mathbb{P}}
\newcommand{\tp}{\overset{{\rm P}}{\to}}
\newcommand{\me}{\mathbb{E}}
\newcommand{\E}{\mathbb{E}}
\newcommand{\mr}{\mathbb{R}}
\newcommand{\mn}{\mathbb{N}}
\DeclareMathOperator{\1}{\mathbbm{1}}
\newtheorem{thm}{Theorem}[section]
\newtheorem{lemma}[thm]{Lemma}
\newtheorem{assertion}[thm]{Proposition}
\theoremstyle{definition}
\theoremstyle{remark}
\newtheorem{rem}[thm]{Remark}
\begin{document}

\title[On intermediate levels of nested occupancy scheme]{On intermediate levels of nested occupancy scheme in random environment generated by stick-breaking II}

\author{Alexander Iksanov}
\address{Faculty of Computer Science and Cybernetics, Taras Shevchenko National University of Kyiv, Kyiv, Ukraine}
\email{iksan@univ.kiev.ua}

\author{Alexander Marynych}
\address{Faculty of Computer Science and Cybernetics, Taras Shevchenko National University of Kyiv, Kyiv, Ukraine}
\email{marynych@unicyb.kiev.ua}

\author{Igor Samoilenko}
\address{Faculty of Computer Science and Cybernetics, Taras Shevchenko National University of Kyiv, 01601 Kyiv, Ukraine}
\email{isamoil@i.ua}

\begin{abstract}
A nested occupancy scheme in random environment is a generalization of the classical Karlin infinite balls-in-boxes occupancy scheme in random environment (with random probabilities). Unlike the Karlin scheme in which the collection of boxes is unique, there is a nested hierarchy of boxes, and the hitting probabilities of boxes are defined in terms of iterated fragmentation of a unit mass. In the present paper we assume that the random fragmentation law is given by stick-breaking in which case the infinite occupancy scheme defined by the first level boxes is known as the Bernoulli sieve. Assuming that $n$ balls have been thrown, denote by $K_n(j)$ the number of occupied boxes in the $j$th level and call the level $j$ intermediate if $j=j_n\to\infty$ and $j_n=o(\log n)$ as $n\to\infty$. We prove a multidimensional central limit theorem for the vector $(K_n(\lfloor j_n u_1\rfloor),\ldots, K_n(\lfloor j_n u_\ell\rfloor)$, properly normalized and centered, as $n\to\infty$, where $j_n\to\infty$ and $j_n=o((\log n)^{1/2})$. The present paper continues the line of investigation initiated in the article \cite{Buraczewski+Dovgay+Iksanov:2020} in which the occupancy of intermediate levels $j_n\to\infty$, $j_n=o((\log n)^{1/3})$ was analyzed.
\end{abstract}

\keywords{Bernoulli sieve, GEM distribution, infinite occupancy, random environment, weak convergence, weighted branching process}

\subjclass[2010]{Primary: 60F05, 60J80;  Secondary: 60C05}

\maketitle

\section{Introduction}\label{Sect1}

A nested occupancy scheme in random environment can be thought of as the infinite Karlin occupancy scheme \cite{Karlin:1967} in random environment (with random probabilities) settled in a consistent way on the tree of a weighted branching process. The scheme and its multitype counterpart were investigated in \cite{Bertoin:2008, Buraczewski+Dovgay+Iksanov:2020, Gnedin+Iksanov:2020, Joseph:2011} and \cite{Businger:2017}, respectively. The present work is a continuation of the paper \cite{Buraczewski+Dovgay+Iksanov:2020}. Therefore, we only recall briefly the definition of the scheme and refer to the cited papers for a much more detailed description, motivation and examples.

Let $P:=(P_r)_{r\in\mn}$ be a collection of positive random variables with an arbitrary joint distribution satisfying $\sum_{r\geq 1}P_r=1$ a.s. Also, let $\mathbb{V}=\cup_{n\in\mn_0}\mn^n$ be the set of all possible individuals of some population, where $\mn_0:=\mn\cup\{0\}$. The ancestor is identified with
the empty word $\varnothing$ and its weight is $P(\varnothing)=1$.
On some probability space $(\Omega, \mathcal{F}, \mmp)$ let
$((P_r(v))_{r\in\mn})_{v \in \mathbb{V}}$ be a family of
independent copies of $(P_r)_{r\in\mn}$. An individual $v
=v_1\ldots v_j$ of the $j$th generation whose weight is denoted by
$P(v)$ produces an infinite number of offspring residing in the $(j+1)$th generation. The
offspring of the individual $v$ are enumerated by $vr = v_1 \ldots
v_j r$, where $r\in \mn$, and the weights of the offspring are
denoted by $P(vr)$. It is postulated that $P(vr)=P(v)P_r(v)$.
Observe that, for each $j\in\mn$, $\sum_{|v|=j} P(v)=1$ a.s.,
where, by convention, $|v|=j$ means that the sum is taken over all
individuals of the $j$th generation. For $j\in\mn$, denote by
$\mathcal{F}_j$ the $\sigma$-algebra generated by
$(P(v))_{|v|=1},\ldots, (P(v))_{|v|=j}$. The nested sequence of
environments is formed by the weights of the subsequent
generations individuals, that is, $(P(v))_{|v|=1}$, $(P(v))_{|v|=2},\ldots$. Further, we identify individuals with
`boxes'. At time $j=0$, all `balls' are collected in the box
$\varnothing$. At time $j=1$, given $\mathcal{F}_1$, `balls' are
allocated independently with probability $P(v)$ of hitting box
$v$, $|v|=1$. At time $j=k$, given $\mathcal{F}_k$, a ball located
in the box $v$ with $|v|=k$ is placed independently of the others
into the box $vr$ with probability $P_r(v)=P(vr)/P(v)$.

Of course, restricting attention to the $j$th generation we obtain the Karlin occupancy scheme with random probabilities $(P(v))_{|v|=j}$. The quantity of traditional interest is the number of occupied boxes in the $j$th generation that we denote by $K_n(j)$ when $n$ balls have been thrown. Since, by construction, the probabilities in subsequent generations are `nested' we infer that the sequence $K_n(j)$ is a.s.\ nondecreasing and thus eventually reaches the value $n$. It is natural to call $\tau_n:=\inf\{j\in\mn: K_n(j)=n\}$ the height of the scheme or the extinction time, for $K_n(j)=n$ for $j\geq \tau_n$. By Theorem 1 in \cite{Joseph:2011},
\begin{equation}\label{height}
\lim_{n\to\infty}(\tau_n/\log n)={\rm const}\quad \text{a.s.},
\end{equation}
where ${\rm const}$ is the explicitly given positive constant.

In the following we assume that the random probabilities $P$ are given by {\it stick-breaking} (a.k.a.\ a residual allocation model)
\begin{equation}\label{BS}
P_r:=W_1W_2\cdot\ldots\cdot W_{r-1}(1-W_r),\quad r\in\mn,
\end{equation}
where $W_1$, $W_2,\ldots$ are independent copies of a random variable $W$ taking values in $(0,1)$. To decide whether `balls' fall into the given `box' of the $1$st generation one may design a sieving procedure involving Bernoulli random variables with random parameters. This explains the term `Bernoulli sieve' used for the occupancy scheme in the $1$st generation. The Bernoulli sieve was introduced in \cite{Gnedin:2004} and then investigated in many articles, an incomplete list of recent contributions includes \cite{Alsmeyer+Iksanov+Marynych:2017, Duchamps+Pitman+Tang:2017+,  Iksanov:2016, Iksanov+Jedidi+Bouzeffour:2017, Pitman+Tang:2017+}. Recall that if $\mmp\{W\in {\rm d}x\}=\theta x^{\theta-1}\1_{(0,1)}(x){\rm d}x$ for some $\theta>0$ the distribution of $P$ is called ${\rm GEM}$ distribution (after Griffiths-Engen-McCloskey) with parameter $\theta$.

We call the $j$th generation {\it early}, {\it intermediate} or {\it late} depending on whether $j$ is fixed, $j=j_n\to\infty$ and $j_n=o(\log n)$ as $n\to\infty$, or $j$ is of order $\log n$. In view of \eqref{height} there are no other generations. It was shown in \cite{Bertoin:2008} that in the late generations $j$ $K_n(j)$, properly normalized, converges a.s.\ to a deterministic multiple of the terminal value $Z$, say, of the Biggins martingale. Since the terminal value is a function of the boundary of the weighted branching process tree, Bertoin's result reveals that the late generations belong to the range of boundary dominance. On the other hand, it was shown in \cite{Gnedin+Iksanov:2020} and, under \eqref{BS}, in \cite{Buraczewski+Dovgay+Iksanov:2020} that in the early generations and the intermediate generations with $j=j_n=o((\log n)^{1/3})$, $K_n(j)$, properly normalized and centered, exhibits Gaussian fluctuations inherited partially from the fluctuations of $K_n(1)$. It is important that the variable $Z$ does not pop up in these results. Thus, the aforementioned generations belong to the range of root dominance. The purpose of the present article is to show that the intermediate generations $j_n=o((\log n)^{1/2})$ belong to the range of root dominance. More precisely, a consequence of Theorem \ref{main100} which is our main result is that, under \eqref{BS} and some additional assumptions on $W$, $K_n(j_n)$ satisfies a central limit theorem similar to that proved in \cite{Buraczewski+Dovgay+Iksanov:2020} for the range $j_n=o((\log n)^{1/3})$ but with a more complicated centering. On the one hand, the latter fact settles in part the conjecture stated on p.~6 in the last cited paper. On the other hand, the need for a more complicated centering leads to significant technical complications. Last but not least, unlike in \cite{Buraczewski+Dovgay+Iksanov:2020} we do not assume that the distribution of $|\log W|$ is nonarithmetic.

We close the introduction by explaining which new effects can be expected in the analysis of $K_n(j_n)$ beyond the level $j_n=o((\log n)^{1/2})$, that is, for intermediate generations growing like $(\log n)^{1/2}$ or faster. Our preliminary calculations make it highly plausible that $K_n(j_n)$ exhibits a countable number of phase transitions at the levels $j_n\sim \lambda (\log n)^{1-1/r}$ for $r=2,3,\ldots$ and $\lambda>0$. To provide a more quantitative justification of the aforementioned phase transitions we state a consequence of our main result
\begin{equation}\label{eq:wlln_kn_jn}
\frac{{j_n ! (\me |\log W|)^{j_n}}K_n(j_n)}{(\log n)^{j_n}}~\tp 1,\quad n\to\infty\quad\text{whenever}\quad j_n=o((\log n)^{1/2})
\end{equation} 
which holds true under the assumptions $\me (\log W)^2<\infty$ and $\me |\log(1-W)|<\infty$. We conjecture that, for integer $r\geq 3$, under the assumptions $\me |\log W|^r<\infty$ and $\me |\log(1-W)|^{r-1}<\infty$, 
\begin{equation*}
\exp{\Big(-\sum_{k=2}^{r-1}(\gamma_k j_n^k)/(\log n)^{k-1}\Big)}\frac{{j_n ! (\me |\log W|)^{j_n}}K_n(j_n)}{(\log n)^{j_n}}~\tp 1,\quad n\to\infty\quad\text{whenever}\quad
j_n=o((\log n)^{1-1/r})
\end{equation*}
for appropriate constants $\gamma_2,\ldots, \gamma_{r-1}$. We stress that even derivation of a weak law of large numbers, let alone distributional limit theorems, at the levels beyond $j_n=o((\log n)^{1/2})$ requires techniques that are completely different from those exploited in the present paper.

\section{Main results}\label{main345}

We write  $\Rightarrow$, ${\overset{{\rm
d}}\longrightarrow}$ and ${\overset{{\rm f.d.d.}}\longrightarrow}$
to denote weak convergence in a function space, weak convergence
of one-dimensional and finite-dimensional distributions,
respectively. For $j\in\mn$ and $t>0$ put $\rho_j(t):=\#\{u\in\mathbb{V}:|u|=j,\,P(u)\geq 1/t\}$. Thus, $\rho_j$ is the counting function for the probabilities in the $j$th generation. Here is our main result.
\begin{thm}\label{main100}
Assume that $P$ is given by \eqref{BS}, that $\sigma^2:={\rm Var}(\log W)\in (0,\infty)$ and that $\me |\log(1-W)|<\infty$. Let $(j_n)_{n\in\mn}$ be a sequence of positive numbers satisfying
$j_n\to \infty$ and $j_n=o((\log n)^{1/2})$ as $n\to\infty$. The following
limit theorem holds, as $n\to\infty$,
\begin{equation}\label{clt5555}
\Bigg(\frac{\lfloor j_n\rfloor^{1/2}(\lfloor j_n u\rfloor-1)!\big(K_n(\lfloor j_n u\rfloor)-\me \rho_{\lfloor j_n u\rfloor}(n)\big)}{(\sigma^2 \mu^{-2\lfloor j_n u\rfloor-1} (\log n)^{2\lfloor j_n
u\rfloor-1})^{1/2}}\Bigg)_{u>0}~{\overset{{\rm f.d.d.}}\longrightarrow}~\Bigg(\int_{[0,\,\infty)}e^{-uy}{\rm
d}B(y)\Bigg)_{u>0},
\end{equation}
where $(B(v))_{v\geq 0}$ is a standard Brownian motion and $\mu:=\me |\log W|<\infty$.
\end{thm}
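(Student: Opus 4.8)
The plan is to exploit the weighted-branching structure and reduce the problem to a known central limit theorem for the Bernoulli sieve applied at the first level, then propagate the fluctuations through the intermediate generations. First I would introduce the natural decomposition of $K_n(j)$ over the boxes of the first generation: if $|v|=1$ carries mass $P(v)$, then conditionally on $\mathcal F_1$ the number of occupied boxes in generation $j$ that descend from $v$ is a copy of $K_{m_v}(j-1)$ (with $m_v$ the number of balls in box $v$, which is $\mathrm{Binomial}(n,P(v))$), and these are independent across $v$ given the environment. This recursion suggests writing $K_n(j_n)-\me\rho_{j_n}(n)$ as a martingale-type sum and identifying the dominant contribution. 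I expect that, because $j_n=o((\log n)^{1/2})$ keeps us in the root-dominance regime, the fluctuations come essentially from a single ``layer'': the randomness in how mass is split at generation one (equivalently, the random number of balls that survive the first split), while all deeper generations contribute only through their conditional expectations, i.e.\ through the deterministic scaling $(\lfloor j_n u\rfloor-1)!^{-1}\mu^{-2\lfloor j_n u\rfloor-1}$-type factors visible in the normalization.

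Next I would set up the Poissonization: replace $n$ deterministic balls by a Poisson$(n)$ number, so that the occupancy counts across disjoint boxes become genuinely independent, handle the de-Poissonization at the end via a concentration estimate on the Poisson count (this is standard for the Bernoulli sieve). Under Poissonization, $K_n(j)$ becomes a sum over $u\in\mathbb V$, $|u|=j$, of indicators $\1\{\text{box }u\text{ nonempty}\}$, each of which, given the environment, is $\mathrm{Bernoulli}(1-e^{-nP(u)})$; hence $K_n(j)-\me\rho_j(n)$ splits into a ``conditional-variance'' part $\sum_{|u|=j}(\1_u - (1-e^{-nP(u)}))$ plus a ``smoothing'' part $\sum_{|u|=j}(1-e^{-nP(u)}) - \me\rho_j(n)$. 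The first part should be negligible after the stated normalization (its conditional variance is of smaller order once $j_n\to\infty$), and the whole CLT should be carried by the second, deterministic-given-environment, part. That part is a functional of the counting measure $\rho_j(\cdot)$ of generation-$j$ masses, and $\log$ of a generation-$j$ mass is a sum of $j$ i.i.d.\ copies of $|\log W|$-type increments (from the stick-breaking) plus the $|\log(1-W)|$ terminal factor; an Edgeworth/renewal analysis of $\me\rho_j(e^t)$ in $t$ together with a Gaussian approximation for the fluctuations of $\rho_j$ around its mean gives a Brownian limit, with the Laplace-transform-of-Brownian-motion form $\int_{[0,\infty)}e^{-uy}\,\mathrm dB(y)$ emerging because the test function indexed by $u$ enters as $e^{-uy}$ after the change of variables $y=(\,\cdot\,)$ in the $\log$-mass variable.

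The core computation, then, is the asymptotics of $\me\rho_j(n)$ and $\mathrm{Var}(\rho_j(n))$ (and, more precisely, the joint covariance structure of $(\rho_{\lfloor j_n u_1\rfloor}(n),\dots,\rho_{\lfloor j_n u_\ell\rfloor}(n))$) as $n\to\infty$ with $j=j_n\to\infty$, $j_n=o((\log n)^{1/2})$. Here I would use the stick-breaking representation to write the generation-$j$ log-masses via a $j$-fold convolution and apply a local central limit theorem / Edgeworth expansion in the regime where the ``time'' $j_n$ grows but is small compared with $(\log n)^{1/2}$; the constraint $j_n=o((\log n)^{1/2})$ is exactly what makes the second-order Edgeworth correction (the $\gamma_2 j_n^2/\log n$ term foreshadowed in the introduction) negligible, so that the factorial/power normalization in \eqref{clt5555} is the correct centering. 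To get finite-dimensional convergence I would verify convergence of covariances of the normalized vector to $\int_{[0,\infty)}e^{-(u_i+u_k)y}\,\mathrm dy$ and then invoke a CLT for sums of independent (given the environment) summands via a Lindeberg or Lyapunov condition, combined with the negligibility of the conditional-variance part established above and the de-Poissonization lemma.

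The main obstacle I anticipate is controlling the error terms in the asymptotics of $\me\rho_{j_n}(n)$ and the covariance functionals uniformly in the growing parameter $j_n$: one needs an Edgeworth-type expansion for a $j_n$-fold convolution evaluated near a moving point $\log n$, with explicit control that the remainder after the leading term is $o(1)$ precisely on the scale $j_n=o((\log n)^{1/2})$, and this must be done without the nonarithmeticity assumption on $|\log W|$ that simplified matters in \cite{Buraczewski+Dovgay+Iksanov:2020}. A secondary difficulty is ensuring the ``smoothing'' part genuinely dominates — i.e.\ that the within-environment (conditional) fluctuations are of strictly smaller order after normalization — which again hinges on delicate moment bounds for $\rho_{j_n}(n)$ that degrade as $j_n\to\infty$.
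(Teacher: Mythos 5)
Your broad strategy is on the right track: you correctly identify the root--dominance philosophy (fluctuations are driven by the first split, deeper generations enter only through conditional expectations), you split off a ``conditional--variance'' piece analogous to the paper's $Y_1(n,u):=K_n(\lfloor j_nu\rfloor)-\rho_{\lfloor j_nu\rfloor}(n)$, and you recognize that the constraint $j_n=o((\log n)^{1/2})$ is precisely what kills the next Edgeworth correction in the asymptotics of $V_j(\log n)=\me\rho_j(n)$. However, there is a genuine gap in the treatment of what you call the ``smoothing'' part: $\sum_{|u|=j}(1-e^{-nP(u)})-\me\rho_j(n)$ is a functional of the \emph{entire} $j$-generation environment, not just of generation one, and the Gaussian approximation for ``fluctuations of $\rho_j$ around its mean'' that you invoke does not follow from a Lindeberg CLT over generation-$j$ boxes, because those boxes are strongly dependent through shared ancestry. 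To make the root--dominance heuristic a proof, one must insert an intermediate decomposition and establish, quantitatively, that replacing the subtrees rooted at the first-generation individuals by their conditional expectations is negligible in the required scale: in the paper's notation, $Y_2(n,u):=\rho_j(n)-\sum_{r\geq1}V_{j-1}(\log n-T_r)\1_{\{T_r\leq\log n\}}$ must be shown to be $o(t^{j-1/2})$ after normalization. That is Theorem \ref{main4}, and it rests on a delicate two-step variance recursion for $D_j(t)={\rm Var}\,N_j(t)$ (with the bounds of Lemma \ref{estim}); your ``I expect the conditional variance is of smaller order'' elides the entire technical core of that step.

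Equally important, once the deeper generations are averaged out, the dominant term is not a sum of weakly dependent summands amenable to a Lindeberg argument, but the convolution $Y_3(n,u)=\int_{[0,\log n]}V_{\lfloor j_nu\rfloor-1}(\log n-y)\,{\rm d}(N(y)-V(y))$, i.e.\ a linear functional of the centered first-generation perturbed-random-walk counting process $N(\cdot)$. The limit $\int_{[0,\infty)}e^{-uy}\,{\rm d}B(y)$ then comes from a \emph{functional} CLT for $(N(ut/j)-V(ut/j))/({\tt s}^2{\tt m}^{-3}t/j)^{1/2}\Rightarrow B$ (Lemma \ref{lem:flt}, drawn from \cite{Alsmeyer+Iksanov+Marynych:2017}), combined with the kernel asymptotics $(\lfloor ju\rfloor-1)!{\tt m}^{\lfloor ju\rfloor-1}t^{-(\lfloor ju\rfloor-1)}V_{\lfloor ju\rfloor-1}(t(1-y/j))\to e^{-uy}$ via \eqref{impo}, a Skorokhod representation, and Lemma \ref{iks2013} to pass to the limit in the Stieltjes integral, plus Lemma \ref{auxmain5} to control the far tail of the kernel. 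Your plan --- compute covariances of $(\rho_{\lfloor j_nu_i\rfloor}(n))_i$ directly and then invoke a CLT for conditionally independent summands --- does not produce this structure and would founder on the dependence; the ``change of variables in the log-mass variable'' you gesture at is a shadow of the kernel-convolution-plus-FCLT mechanism, not a substitute for it. Finally, a minor but noteworthy point: you propose Poissonization to handle $K_n$ vs.\ $\rho_j$, whereas the paper bounds $\me|K_n(\lfloor j_nu\rfloor)-\rho_{\lfloor j_nu\rfloor}(n)|$ directly via the two ``light'' key renewal lemmas (Lemmas \ref{key1} and \ref{key2}) --- either route should work, but the de-Poissonization error must still be controlled uniformly in the growing parameter $j_n$, which is nontrivial.
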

\begin{rem}
Under the additional assumption that the distribution of $|\log W|$ is nonlattice, a version of Theorem \ref{main100} was proved in \cite{Buraczewski+Dovgay+Iksanov:2020} for the range
$j=j_n\to \infty$ and $j_n=o((\log n)^{1/3})$ as $n\to\infty$. Actually, it was shown there that for such $j$ the centering $\me \rho_{\lfloor j_n u\rfloor}(n)$ can be replaced with $(\log n)^{\lfloor j_n u\rfloor}/((\lfloor j_n u\rfloor)!\mu^{\lfloor j_n u\rfloor})$. Such a replacement is not always possible for $j$ growing faster (yet slower than $(\log n)^{1/2}$), as the next result shows.
\end{rem}
\begin{assertion}\label{precise}
There exist distributions of $W$ satisfying the assumptions of Theorem \ref{main100} for which $$\lim_{n\to\infty}\frac{j_n^{1/2}(j_n-1)!|\me \rho_{j_n}(n)-(\log n)^{j_n}/((j_n)!\mu^{j_n})|}{\mu^{-j_n}(\log n)^{j_n-1/2}}
$$ is finite and positive provided that $j_n\sim a(\log n)^{1/3}$ as $n\to\infty$ for some $a>0$ and is infinite provided that $\lim_{n\to\infty}(\log n)^{-1/3}j_n=\infty$ and $j_n=o((\log n)^{1/2})$ as $n\to\infty$.
\end{assertion}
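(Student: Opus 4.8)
The quantity in Proposition \ref{precise} is deterministic, so the argument will be real analysis resting on one exact identity for $\me\rho_j(n)$. Put $\mu=\me|\log W|$, $m_2=\me(\log W)^2$, $\eta=|\log(1-W)|$, $\nu_1=\me\eta$, $S_k=\sum_{i=1}^{k}|\log W_i|$, and let $T_j=\eta_1+\dots+\eta_j$ be a sum of independent copies of $\eta$. A first-moment (many-to-one) computation on the weighted branching tree shows that $\me\rho_j(n)=\nu^{\ast j}([0,\log n])$ where $\nu:=\sum_{r\ge1}\mmp\{-\log P_r\in\cdot\,\}$; since $-\log P_r=S_{r-1}+\eta_r$ with $S_{r-1}$ independent of $\eta_r=|\log(1-W_r)|$, one has $\nu=\mathbb U\ast\mathrm{law}(\eta)$ with $\mathbb U=\sum_{k\ge0}\mmp\{S_k\in\cdot\,\}$ the renewal measure of the walk with step distribution that of $|\log W|$, hence, using $\mathrm{law}(\eta)^{\ast j}=\mathrm{law}(T_j)$,
\begin{equation*}
\me\rho_j(n)=\me\bigl[\mathbb U^{\ast j}\bigl([0,\log n-T_j]\bigr)\,\ind{T_j\le\log n}\bigr],\qquad T_j\ \text{independent of}\ \mathbb U.
\end{equation*}
The task becomes the asymptotics of $R_j(z):=\mathbb U^{\ast j}([0,z])$ as $z\to\infty$, followed by an average over $T_j$.

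The plan is to produce an explicit admissible family: take $W$ with the ${\rm GEM}(\theta)$ law for some $\theta\ne1$. Then $|\log W|$ is exponential of rate $\theta$, so $\mu=1/\theta$, $m_2=2/\theta^{2}$, $\sigma^{2}=1/\theta^{2}\in(0,\infty)$; $1-W$ is ${\rm Beta}(1,\theta)$, so $\eta$ has all moments and $\nu_1=\psi(\theta+1)+\gamma<\infty$; and $\mathbb U=\delta_0+\theta\,\mathrm{Leb}$, whence $R_j(z)=\sum_{m=0}^{j}\binom{j}{m}(\theta z)^{m}/m!$. Reindexing and using $0\le\binom{j}{k}(j)_k(\theta z)^{-k}\le (j^{2}/(\theta z))^{k}/k!$ (with $(j)_k=j(j-1)\cdots(j-k+1)$) yields, uniformly for $j=o(z^{1/2})$,
\begin{equation*}
R_j(z)=\frac{(\theta z)^{j}}{j!}\sum_{k\ge0}\binom{j}{k}\frac{(j)_k}{(\theta z)^{k}}=\frac{(\theta z)^{j}}{j!}\Bigl(1+\frac{j^{2}}{\theta z}+O\bigl((j^{2}/(\theta z))^{2}\bigr)\Bigr),
\end{equation*}
noting $(\theta z)^{j}/j!=(z/\mu)^{j}/j!$ and $j^{2}/(\theta z)=j^{2}m_2/(2\mu z)$ for this $W$. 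I would then insert $z=\log n-T_j$ into this expansion after discarding the event $\{T_j>\tfrac12\log n\}$, which costs only $O(2^{-j})$ relative to the eventual correction because there $R_j(\log n-T_j)\le R_j(\tfrac12\log n)$ while $\mmp\{T_j>\tfrac12\log n\}\le 2\nu_1 j/\log n$; on the complementary event I would replace the factor $(1-T_j/\log n)^{j}$ by $e^{-(j/\log n)T_j}$, the difference being at most $(j/(\log n)^{2})T_j^{2}e^{-(j/\log n)T_j}$ whose expectation is $O(j^{2})=o(j\log n)$ (here $\eta$ has all moments; for a general $W$ one uses $\me[\eta^{2}e^{-s\eta}]=o(1/s)$ as $s\downarrow0$, valid whenever $\nu_1<\infty$), and finally $\me e^{-(j/\log n)T_j}=(\me e^{-(j/\log n)\eta})^{j}=1-j^{2}\nu_1/\log n+o(j^{2}/\log n)$ via $\me e^{-s\eta}=1-\nu_1 s+o(s)$. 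Collecting the three factors,
\begin{equation*}
\me\rho_{j}(n)=\frac{(\log n)^{j}}{j!\,\mu^{j}}\Bigl(1+\frac{j^{2}\kappa}{\log n}+o(j^{2}/\log n)\Bigr),\qquad \kappa:=\frac{\me(\log W)^{2}}{2\,\me|\log W|}-\me|\log(1-W)|,
\end{equation*}
for every sequence $j=j_n\to\infty$ with $j_n=o((\log n)^{1/2})$.

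To finish, observe that for $W$ with the ${\rm GEM}(\theta)$ law, $\kappa=\kappa(\theta)=\theta^{-1}-\psi(\theta+1)-\gamma$, with $\kappa(1)=0$ and $\kappa'(\theta)=-\theta^{-2}-\psi'(\theta+1)<0$ for every $\theta>0$; hence $\kappa(\theta)\ne0$ whenever $\theta\ne1$ (for example $\theta=2$ gives $\kappa=-1$). Fixing such a $\theta$, the last display gives $\me\rho_{j_n}(n)-(\log n)^{j_n}/(j_n!\mu^{j_n})=\kappa\,j_n^{2}(\log n)^{j_n-1}(j_n!\mu^{j_n})^{-1}(1+o(1))$, so the ratio in the Proposition equals
\begin{equation*}
\frac{j_n^{1/2}(j_n-1)!\,\bigl|\me\rho_{j_n}(n)-(\log n)^{j_n}/(j_n!\mu^{j_n})\bigr|}{\mu^{-j_n}(\log n)^{j_n-1/2}}=|\kappa|\Bigl(\frac{j_n^{3}}{\log n}\Bigr)^{1/2}(1+o(1)),
\end{equation*}
which tends to $|\kappa|\,a^{3/2}\in(0,\infty)$ when $j_n\sim a(\log n)^{1/3}$, and to $+\infty$ when $(\log n)^{-1/3}j_n\to\infty$ and $j_n=o((\log n)^{1/2})$; this is exactly the assertion. (The uniform law on $(0,1)$ is ${\rm GEM}(1)$, for which $\kappa=0$; this is the codimension-one exceptional set, which is why one passes to $\theta\ne1$.)

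I expect the main obstacle to be the uniformity over $j=j_n\to\infty$ throughout the regime $j_n=o((\log n)^{1/2})$: the $j^{2}/\log n$ term has to be isolated with a remainder that is genuinely $o(j^{2}/\log n)$ and not merely $O(j^{2}/\log n)$, and it is here that the fluctuations of $\mathbb U$ on scale $(\log n)^{1/2}$ enter. Working with a ${\rm GEM}$ law is what keeps this manageable: $\mathbb U^{\ast j}$ reduces to the explicit truncated exponential series above, so the $k$th convolution power of $\mathbb U-\mu^{-1}\mathrm{Leb}$, which contributes at relative order $(j^{2}/\log n)^{k}$, is summed by inspection; and the finiteness of all moments of $\eta$ trivializes the comparison of $(1-T_j/\log n)^{j}$ with $e^{-(j/\log n)T_j}$. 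For a general $W$ obeying only the hypotheses of Theorem \ref{main100} together with $\kappa\ne0$ the same scheme works, but then these two points require, respectively, a Stone-type decomposition of $\mathbb U$ with $\int y\,|\mathbb U({\rm d}y)-\mu^{-1}{\rm d}y|<\infty$, and the bound $\me[\eta^{2}e^{-s\eta}]=o(1/s)$ as $s\downarrow0$.
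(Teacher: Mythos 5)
Your argument is correct and reaches the same second-order constant $\kappa=\me(\log W)^2/(2\me|\log W|)-\me|\log(1-W)|={\tt m}\gamma$, but it proceeds by a genuinely different route. The paper first proves Proposition \ref{precise2}: for any $(\xi,\eta)$ with an absolutely continuous component and Cramér-type exponential moments, $V_j(t)-t^j/(j!{\tt m}^j)\sim\gamma jt^{j-1}/((j-1)!{\tt m}^{j-1})$; its engine is Lemma \ref{eq:Borov}, a Stone-type decomposition $V(t)=t/{\tt m}+\gamma+T(t)$ with $T$ exponentially decaying (borrowed from Borovkov), followed by an induction in $j$ comparing $V_j$ with the polynomial $W_j(t)=\sum_i\binom{j}{i}\gamma^{j-i}t^i/(i!{\tt m}^i)$. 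You instead commit to the ${\rm GEM}(\theta)$ family with $\theta\ne1$, where $|\log W|$ is exponential, $\mathbb U=\delta_0+\theta\,{\rm Leb}$, and $\mathbb U^{\ast j}([0,z])$ is a finite truncated exponential sum that you expand directly; the $G^{\ast j}$-part (the average over $T_j$) is peeled off at the end and enters only through $\me T_j$. This is more elementary (no exponential-rate renewal theorem) but is tied to the ${\rm GEM}$ family, whereas the paper's version is structural and more general. Two points you handle that the paper leaves implicit: since Proposition \ref{precise} is an existence claim and the paper's proof merely says ``assume $(\xi,\eta)$ satisfies Proposition \ref{precise2},'' you make the existence self-contained by producing ${\rm GEM}(\theta)$ and showing $\kappa(\theta)$ is strictly monotone with $\kappa(1)=0$; and your example $\theta=2$ has $\kappa<0$, which falls outside the literal hypothesis $\gamma>0$ of Proposition \ref{precise2} — this is harmless because the statement takes absolute values and the paper's induction works verbatim with $|\gamma|\le c$ (or take $\theta<1$, e.g. $\theta=1/2$ gives $\kappa=2\log 2>0$). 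One small bookkeeping slip: the line asserting that $\me\bigl[(j/(\log n)^2)T_j^2e^{-(j/\log n)T_j}\bigr]$ is ``$O(j^2)=o(j\log n)$'' has the powers of $\log n$ misplaced as written; the expectation is $O(j^3/(\log n)^2)$, which is $o(j^2/\log n)$ precisely because $j=o(\log n)$, so the conclusion stands but that sentence should be rewritten.
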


\section{Limit theorems for a branching random walk}

As in \cite{Buraczewski+Dovgay+Iksanov:2020} we shall work with a branching random walk which is an additive counterpart of the original weighted branching process obtained by the logarithmic transformation.

Let $(\xi_i, \eta_i)_{i\in\mn}$ be independent copies of a random vector $(\xi, \eta)$ with positive arbitrarily dependent components. Denote by $(S_i)_{i\geq 0}$ the zero-delayed
ordinary random walk with increments $\xi_i$ for $i\in\mn$, that is, $S_0:=0$ and $S_i:=\xi_1+\cdots+\xi_i$ for $i\in\mn$. Define
\begin{equation*}
T_i:=S_{i-1}+\eta_i,\quad i\in \mn.
\end{equation*}
The sequence $T:=(T_i)_{i\in\mn}$ is called {\it perturbed random walk}. The recent book \cite{Iksanov:2016} provides a good overview of known results for the so defined perturbed random walks.
Put $N(t):=\sum_{i\geq 1}\1_{\{T_i\leq t\}}$ and $V(t):=\me N(t)$ for $t\geq 0$. One can check that
\begin{equation}\label{equ}
V(t)=\me U((t-\eta)^+)=\int_{[0,\,t]}U(t-y){\rm d}G(y),\quad t\geq 0
\end{equation}
where, for $t\geq 0$, $U(t):=\sum_{i\geq 0}\mmp\{S_i\leq t\}$ is
the renewal function and $G(t)=\mmp\{\eta\leq t\}$. In what follows we shall write $k_1\ast k_2$ 
for the Lebesgue--Stieltjes convolution of 
functions $k_1$ and $k_2$ 
of locally bounded variation. Thus,
$$
(k_1 \ast k_2)(t) = \int_{\mr}k_1(t-y){\rm d}k_2(y),\quad t\in\mr.
$$
Using this notation we can restate \eqref{equ} as $V=U\ast G$.

Here is an informal description of the construction of a branching random walk in the special case it is generated by $T$. At time $0$ there is one individual, the ancestor. The ancestor produces offspring (the first generation) with positions on $[0,\infty)$ given by the points of $T$. The
first generation produces the second generation. The displacements of positions of the second generation individuals with respect to their mothers' positions are distributed according to
copies of $T$, and for different mothers these copies are independent. The second generation produces the third one, and so on. All individuals act independently of each other.

For $t\geq 0$ and $j\in\mn$, denote by $N_j(t)$ the number of the $j$th generation individuals with positions $\leq t$ and put $V_j(t):=\me N_j(t)$. Then $N_1(t)=N(t)$, $V_1(t)=V(t)$ and
$$
V_j(t):=(V_{j-1}\ast V)(t)=\int_{[0,\,t]} V_{j-1}(t-y){\rm d}V(y),\quad j\geq 2,\quad t\geq 0.
$$
We now provide a decomposition of $N_j$ which is of major importance for what follows
\begin{equation}\label{basic1232}
N_j(t)=\sum_{r\geq 1}N^{(r)}_{j-1}(t-T_r)\1_{\{T_r\leq t\}},\quad j\geq 2,\quad t\geq 0.
\end{equation}
Here, $N_{j-1}^{(r)}(t)$ is the number of successors in the $j$th generation which reside in the interval $[T_r,t+T_r]$ of the first generation individual with position $T_r$. By the branching property, $(N_{j-1}^{(1)}(t))_{t\geq 0}$, $(N_{j-1}^{(2)}(t))_{t\geq 0},\ldots$ are independent copies of $(N_{j-1}(t))_{t\geq 0}$ which are also independent of $T$.

Theorem \ref{main100} will be deduced from Theorems \ref{main4} and \ref{main5} when setting $(\xi,\eta)=(|\log W|, |\log(1-W)|)$ and an additional result.
\begin{thm}\label{main4}
Let $t\mapsto j(t)$ be any positive function satisfying $\lim_{t\to\infty} j(t)=\infty$ and $j(t)=o(t^{1/2})$ as $t\to\infty$. Assume that ${\tt s}^2={\rm Var}\,\xi \in (0,\infty)$ and $\me \eta<\infty$. Then, as $t\to\infty$, $$\bigg(\frac{\lfloor j(t)\rfloor^{1/2}(\lfloor j(t)u\rfloor-1)!}{{\tt m}^{-\lfloor j(t)u\rfloor} t^{\lfloor j(t)u\rfloor-1/2}}\Big(N_{\lfloor j(t)u\rfloor}(t)-\sum_{r\geq 1}V_{\lfloor j(t)u\rfloor-1}(t-T_r)\1_{\{T_r\leq t\}}\bigg)_{u>0}~{\overset{{\rm f.d.d.}}\longrightarrow}~(\Theta(u))_{u>0},$$ where $\Theta(u)=0$ for $u>0$ and ${\tt m}:=\me\xi<\infty$. 
\end{thm}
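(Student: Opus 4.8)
The plan is to prove, for each fixed $u>0$, that $c_t D_m(t)\to 0$ in probability, where $m=m(t):=\lfloor j(t)u\rfloor$, $c_t:=(\lfloor j(t)\rfloor)^{1/2}(m-1)!\,{\tt m}^{m}t^{1/2-m}$, and
\[
D_m(t):=N_m(t)-\sum_{r\ge1}V_{m-1}(t-T_r)\ind{T_r\le t};
\]
since the limit process $(\Theta(u))_{u>0}$ is deterministic, this gives the asserted finite-dimensional convergence. By Chebyshev's inequality it suffices to show $c_t^{2}\,\me\big[D_m(t)^2\big]\to0$. Inserting \eqref{basic1232} into the definition of $D_m(t)$ gives $D_m(t)=\sum_{r\ge1}\big(N^{(r)}_{m-1}(t-T_r)-V_{m-1}(t-T_r)\big)\ind{T_r\le t}$, which, conditionally on $T=(T_r)_{r\ge1}$, is a series of independent, centered, square-integrable summands. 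Hence, writing $v_k(s):={\rm Var}\,N_k(s)$ and using that ${\rm d}V$ is the mean measure of $(T_r)_{r\ge1}$, one obtains the exact identity $\me[D_m(t)^2]=(v_{m-1}\ast V)(t)$, so the whole matter comes down to a sufficiently sharp upper bound for $v_{m-1}$.

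Next, a recursion for the variances: conditioning \eqref{basic1232} on $T$ once more gives, for $k\ge 2$, $v_k=v_{k-1}\ast V+\psi_{k-1}$, where $\psi_\ell(s):={\rm Var}\big(\sum_{r\ge1}V_\ell(s-T_r)\ind{T_r\le s}\big)$ and $v_1=\psi_0$; iterating it and convolving with $V$ yields $\me[D_m(t)^2]=\sum_{k=1}^{m-1}(\psi_{m-1-k}\ast V_k)(t)$. The decisive estimate is
\[
\psi_\ell(s)\le C\,\frac{(1+s)^{2\ell+1}}{\ell\,(\ell!)^{2}\,{\tt m}^{2\ell}}\qquad(\ell\ge1,\ \ell=o(\sqrt s)),
\]
together with $\psi_0(s)=v_1(s)=O(1+s)$ --- the latter being the known second-moment bound for the Bernoulli sieve under ${\tt s}^2<\infty$ and $\me\eta<\infty$ (cf.\ \cite{Iksanov:2016}). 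To get it I would rewrite $\sum_{r\ge1}V_\ell(s-T_r)\ind{T_r\le s}=(N_1\ast V_\ell)(s)$ by a layer-cake identity, apply the Cauchy--Schwarz inequality to the covariances of $N_1$, namely ${\rm Cov}(N_1(a),N_1(b))\le\sqrt{v_1(a)v_1(b)}\le C'\sqrt{(1+a)(1+b)}$, so that $\psi_\ell(s)\le C'\big(\int_{[0,s]}\sqrt{1+s-y}\,{\rm d}V_\ell(y)\big)^{2}$, and then integrate by parts (here $V_\ell(0)=0$ for $\ell\ge1$ because $\eta>0$ a.s.), insert the uniform renewal-type bound $V_\ell(y)\le C''(1+y)^{\ell}/(\ell!\,{\tt m}^{\ell})$ valid for $\ell=o(\sqrt y)$, and compare the resulting integral with a Gamma kernel via $(1-x)^{\ell}\le e^{-\ell x}$; this last step supplies the crucial factor $\ell^{-1/2}$, giving $\int_{[0,s]}\sqrt{1+s-y}\,{\rm d}V_\ell(y)\le C\,(1+s)^{\ell+1/2}/(\sqrt{\ell}\,\ell!\,{\tt m}^{\ell})$.

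Then I would bound each term $(\psi_{m-1-k}\ast V_k)(t)=\int_{[0,t]}\psi_{m-1-k}(t-y)\,{\rm d}V_k(y)$ by combining the estimate for $\psi_{m-1-k}$ with $V_k(y)\le C''(1+y)^{k}/(k!\,{\tt m}^{k})$ and the Beta integral $\int_0^t(t-y)^{a}y^{k-1}\,{\rm d}y=t^{a+k}\,a!\,(k-1)!/(a+k)!$; since $t\gg m$, the resulting bounds increase with $i:=m-1-k$, so the sum is dominated by its $k=1$ term, which yields $\me[D_m(t)^2]\le C\,t^{2m-2}/\big(((m-1)!)^{2}\,{\tt m}^{2m-3}\big)$. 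Multiplying by $c_t^{2}=\lfloor j(t)\rfloor\,((m-1)!)^{2}\,{\tt m}^{2m}\,t^{1-2m}$ leaves $O(j(t)/t)$, which tends to $0$ since $j(t)=o(t^{1/2})$. The $t$-ranges in which the sharp bounds for $\psi_\ell$ or $V_\ell$ are unavailable --- small argument, or index comparable to the argument --- contribute, by a crude bound obtained from Markov's inequality applied to the moment generating function, at most an amount exponential in $m$ per term, multiplied by a weight of order $(cm/t)^{2m}$, and are thus negligible as $t\to\infty$ because $m=o(\sqrt t)$.

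The step I expect to be the main obstacle is obtaining the estimate for $\psi_\ell$, and for the convolutions, with the correct dependence on the growing index $\ell$. The naive bounds $\psi_\ell(s)\lesssim(1+s)\,V_\ell(s)^{2}$ and $\int_{[0,t]}(t-y)^{a}\,{\rm d}V(y)\lesssim t^{a}\,V(t)$ each overshoot by a factor $\asymp\ell$, and with both losses one is left with $O(j(t)^{3}/t)$ only, which suffices for the range $j(t)=o((\log n)^{1/3})$ treated in \cite{Buraczewski+Dovgay+Iksanov:2020} but not for $o((\log n)^{1/2})$. Recovering both factors rests on exploiting the concentration of the measure ${\rm d}V_\ell$ near its right endpoint --- this is what the Gamma- and Beta-kernel computations do --- and therefore on sharp uniform asymptotics for the renewal-type functions $V_\ell$; it is the range of validity of those asymptotics that is responsible for the hypothesis $j(t)=o(t^{1/2})$.
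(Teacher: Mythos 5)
Your top-level strategy is the same as the paper's: reduce via Chebyshev and Cram\'er--Wold to showing $c_t^2\,\E[D_m(t)^2]\to 0$, use the branching decomposition \eqref{basic1232} to write $\E[D_m(t)^2]=(v_{m-1}\ast V)(t)$ with $v_k:=\mathrm{Var}\,N_k$, and unroll the recursion $v_k=v_{k-1}\ast V+\psi_{k-1}$ into $\sum_{k=1}^{m-1}(\psi_{m-1-k}\ast V_k)(t)$ --- this is exactly $\int_{[0,t]}D_{j-1}(t-y)\,{\rm d}V(y)$ in \eqref{recur}, with your $\psi_\ell$ being the paper's $I_{\ell+1}$. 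Where you genuinely diverge is in the key estimate for $\psi_\ell$. You apply Cauchy--Schwarz to the covariance kernel of $N_1$, $\mathrm{Cov}(N_1(a),N_1(b))\le\sqrt{v_1(a)v_1(b)}$, combined with $v_1(a)=O(1+a)$ and a Beta-integral computation to extract the factor $\ell^{-1/2}$ from $\int_{[0,s]}\sqrt{1+s-y}\,{\rm d}V_\ell(y)$; the paper instead bounds $I_j$ by an explicit second-moment computation involving the renewal function $U$ (Step~1, culminating in \eqref{ik}), which gives a weaker bound $I_{\ell+1}(s)\lesssim s^{2\ell+1}/((\ell!)^2{\tt m}^{2\ell+1})$ but then recovers a factor $1/(2j-2)$ in the final convolution $\int_{[0,t]}(t-y)^{2j-3}\,{\rm d}V(y)$. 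Your route delivers a somewhat sharper rate ($O(j/t)$ against the paper's $O(j^2/t)$), but both are equally adequate for $j=o(t^{1/2})$, and both rest on the same uniform bound $V_\ell(y)\lesssim y^\ell/(\ell!{\tt m}^\ell)$ (the paper's \eqref{vk}), which is indeed what forces $j=o(t^{1/2})$.

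There is, however, one concrete flaw: your treatment of the ranges where the sharp bounds on $\psi_\ell$ and $V_\ell$ fail. You appeal to ``Markov's inequality applied to the moment generating function'', but under the hypotheses of Theorem~\ref{main4} only $\E\xi^2<\infty$ and $\E\eta<\infty$ are assumed; no exponential moment of $\xi$ or $\eta$ exists, so no MGF-based bound is available. The paper handles this elementarily: $\E N_{j-1}(s)^2=D_{j-1}(s)+V_{j-1}(s)^2$ is nondecreasing in $s$, hence $\max_{s\le a_j}D_{j-1}(s)\le D_{j-1}(a_j)+V_{j-1}^2(a_j)$ with $a_j=2\max(c,1){\tt m}j^2$, and then $(D_{j-1}(a_j)+V_{j-1}^2(a_j))U(a_j)$ is shown to be $o\big(t^{2j-2}/((j-2)!(j-1)!{\tt m}^{2j-2})\big)$ by the elementary renewal theorem and the fact that $(aj^2/t)^{2j-2}$ decays faster than any power of $j$. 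You would need a similar monotonicity-based argument (not an MGF argument) to make the boundary contributions rigorous. The remaining gaps --- the uniform bound on $V_\ell$, the $O(1+s)$ bound on $v_1$, and the dominance of the $k=1$ term in the sum --- are all true and provable, but are stated without proof.
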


\begin{thm}\label{main5}
Assume that ${\tt s}^2={\rm Var}\,\xi\in (0,\infty)$ and that $\me \eta<\infty$. Let $t\mapsto j(t)$ be any positive function satisfying $j(t)\to \infty$
and $j(t)=o(t^{1/2})$ as $t\to\infty$.  Then, as $t\to\infty$,
\begin{multline}\label{clt22}
\left(\frac{\lfloor j(t)\rfloor^{1/2}(\lfloor j(t)u\rfloor-1)!}{({\tt s}^2{\tt m}^{-2\lfloor j(t)u\rfloor-1}t^{2\lfloor j(t)u\rfloor-1})^{1/2}}\bigg(\sum_{r\geq
1}V_{\lfloor j(t)u\rfloor-1}(t-T_r)\1_{\{T_r\leq t\}}-V_{\lfloor j(t)u \rfloor}(t)\bigg)\right)_{u>0}\\
{\overset{{\rm f.d.d.}}\longrightarrow}~ \Bigg(\int_{[0,\,\infty)}e^{-uy}{\rm d}B(y)\Bigg)_{u>0},
\end{multline}
where $(B(v))_{v\geq 0}$ is a standard Brownian motion.
\end{thm}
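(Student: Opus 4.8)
\medskip
\noindent\emph{Plan of the proof.}\
Fix $u>0$ and write $k:=\lfloor j(t)\rfloor$, $j_u:=\lfloor j(t)u\rfloor$, $s:=t/k$, and
\[
\Phi_{j_u}(t):=\sum_{r\ge 1}V_{j_u-1}(t-T_r)\ind{T_r\le t}-V_{j_u}(t),
\]
the quantity whose normalization appears in \eqref{clt22}. Since $V_{j_u}=V_{j_u-1}\ast V$ and the counting process $N(y)=\sum_{r\ge 1}\ind{T_r\le y}$ of $T$ has mean $V$, one has the \emph{exact} representation
\[
\Phi_{j_u}(t)=\int_{[0,t]}V_{j_u-1}(t-y)\,{\rm d}X(y),\qquad X:=N-V,
\]
so that the randomness of $\Phi_{j_u}(t)$ sits entirely in the centred renewal-type process $X$. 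The basic probabilistic input is the functional central limit theorem for $X$: under ${\tt s}^2={\rm Var}\,\xi<\infty$ and $\me\eta<\infty$ one has, as $s\to\infty$,
\[
\big(s^{-1/2}X(sv)\big)_{v\ge 0}~\Rightarrow~\big({\tt s}\,{\tt m}^{-3/2}B(v)\big)_{v\ge 0}\quad\text{in }D[0,\infty),
\]
$B$ a standard Brownian motion, together with the classical bounds $\me|X(y)|\le C(1+y)^{1/2}$ and $\sup_{y\le t}|X(y)|=O_{\mmp}(t^{1/2})$; see \cite{Iksanov:2016}. The perturbation $\eta$ affects $X$ only by $O_{\mmp}(1)$ uniformly, so this input needs no nonlattice assumption on $\xi$ --- which is where the present approach departs from \cite{Buraczewski+Dovgay+Iksanov:2020}.

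The first move is to replace $V_{j_u-1}$ by its leading monomial $p_\ell(x):=x^\ell/(\ell!\,{\tt m}^\ell)$. Using $V(x)=x/{\tt m}+O(1)$ uniformly (valid under ${\rm Var}\,\xi<\infty$ with no nonlattice assumption, since $U(x)=x/{\tt m}+O(1)$ and $V=U\ast G$) together with $V_\ell=V^{\ast\ell}$, one shows --- this is the technical heart, discussed below --- that $\int_{[0,t]}(V_{j_u-1}-p_{j_u-1})(t-y)\,{\rm d}X(y)$ is $o_{\mmp}(1)$ after multiplication by the normalizing constant of \eqref{clt22}. Granting this, an integration by parts (the boundary terms vanish, as $X(0)=0$ and $p_{j_u-1}(0)=0$) followed by the substitution $y=tv$ gives
\[
\Phi_{j_u}(t)=\frac{t^{\,j_u-1}}{(j_u-2)!\,{\tt m}^{\,j_u-1}}\int_0^1 X(tv)(1-v)^{\,j_u-2}\,{\rm d}v+({\rm error}),
\]
with $({\rm error})$ negligible at the scale of \eqref{clt22}.

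Multiplying the main term by the normalizing constant of \eqref{clt22} gives
\[
\frac{(j_u-1)\,k^{1/2}\,{\tt m}^{3/2}}{{\tt s}\,t^{1/2}}\int_0^1 X(tv)(1-v)^{\,j_u-2}\,{\rm d}v .
\]
Because $(1-v)^{\,j_u-2}\le e^{-(j_u-2)v}$ and $(j_u-2)/k\to u$, the part of the integral over $v\ge\delta$ contributes $O_{\mmp}\big(k^{3/2}e^{-\delta k u/2}\big)=o_{\mmp}(1)$ for every fixed $\delta>0$; on $[0,\delta]$ the substitution $v=w/k$, together with $s=t/k\to\infty$ (here $k=o(t^{1/2})$ is used) and $(j_u-1)/k\to u$, rewrites the expression as
\[
\frac{u\,{\tt m}^{3/2}}{{\tt s}}\int_0^{\delta k}s^{-1/2}X(sw)\,(1-w/k)^{\,j_u-2}\,{\rm d}w+o_{\mmp}(1).
\]
Now $(1-w/k)^{\,j_u-2}\to e^{-uw}$ pointwise and is bounded by $e^{-uw/2}$ for $t$ large; using the functional CLT for $X$ on compact intervals, the moment bound $\me|s^{-1/2}X(sw)|\le C(1+w)^{1/2}$ to control the $w$-tail, and dominated convergence, this converges in distribution to
\[
\frac{u\,{\tt m}^{3/2}}{{\tt s}}\cdot\frac{{\tt s}}{{\tt m}^{3/2}}\int_0^\infty B(w)e^{-uw}\,{\rm d}w=u\int_0^\infty B(w)e^{-uw}\,{\rm d}w=\int_{[0,\infty)}e^{-uw}\,{\rm d}B(w),
\]
the last step by integration by parts in the Wiener integral. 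Every $u$ being treated through the single convergence $s^{-1/2}X(s\,\cdot)\Rightarrow{\tt s}\,{\tt m}^{-3/2}B$ and continuous functionals thereof, the joint (f.d.d.) convergence over $u_1,\dots,u_\ell$ --- that is, \eqref{clt22} --- follows at once.

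The crux is the reduction of $V_{j_u-1}$ to $p_{j_u-1}$: one may not simply drop $V_{j_u-1}-p_{j_u-1}$. In the $(j_u-1)$-fold convolution $V^{\ast(j_u-1)}$ the $O(1)$ renewal correction to $V(x)=x/{\tt m}+O(1)$ accumulates into a next-order term of size $\asymp(j_u-1)\,p_{j_u-2}$, whose own fluctuation --- handled exactly as the main term above --- enters $\Phi_{j_u}(t)$ at relative order $\asymp j_u^2/t$; this tends to $0$ precisely when $j(t)=o(t^{1/2})$, and that is where the hypothesis is used. All higher-order corrections, and (in the lattice case) the bounded oscillatory part of $V(x)-x/{\tt m}$ --- which is why the nonlattice assumption can be dropped --- are then negligible too. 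Making this rigorous, with uniform estimates for $V^{\ast j}-p_j$ and for the fluctuations of $\sum_r(V_{j_u-1}-p_{j_u-1})(t-T_r)\ind{T_r\le t}$ (refining the estimates of \cite{Buraczewski+Dovgay+Iksanov:2020}), is the bulk of the work; the remaining steps --- the functional CLT for $X$ with no nonlattice assumption, the interchange of limit and integration against the concentrating kernel $(1-v)^{\,j_u-2}$, and passing to the limit on the slowly growing interval $[0,\delta k]$ --- are then routine.
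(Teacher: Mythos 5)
Your plan opens exactly as the paper's does — the identity
$\sum_{r\ge1}V_{j_u-1}(t-T_r)\1_{\{T_r\le t\}}-V_{j_u}(t)=\int_{[0,t]}V_{j_u-1}(t-y)\,{\rm d}(N-V)(y)$,
the functional CLT for $N-V$ from \cite{Alsmeyer+Iksanov+Marynych:2017} (indeed lattice-agnostic), integration by parts, rescaling to reveal a kernel that concentrates like $(1-\cdot/j)^{j_u-1}\to e^{-u\cdot}$, and a compact-window-plus-tail split. But you then insert an extra reduction — replacing $V_{j_u-1}$ by the monomial $p_{j_u-1}(x)=x^{j_u-1}/((j_u-1)!\,{\tt m}^{j_u-1})$ \emph{inside the stochastic integral} — and you explicitly defer showing that $\int_{[0,t]}(V_{j_u-1}-p_{j_u-1})(t-y)\,{\rm d}(N-V)(y)$ is negligible after normalization, calling it ``the technical heart'' and ``the bulk of the work''. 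That is a genuine gap, and a nontrivial one: the obvious estimate (integrate by parts and bound the total variation of $(V_{j_u-1}-p_{j_u-1})(t-\cdot)$ on $[0,t]$ times $\sup_{y\le t}|N(y)-V(y)|=O_{\mmp}(t^{1/2})$) produces a term of the \emph{same} order as the main fluctuation, not smaller, because that total variation is comparable to $V_{j_u-1}(t)$ itself. To reach the relative order $j^2/t$ you assert, you would need a genuinely centered second-moment computation for $\sum_r(V_{j_u-1}-p_{j_u-1})(t-T_r)\1_{\{T_r\le t\}}$, in the spirit of the bound on $I_j(t)$ in the proof of Theorem \ref{main4}; this is substantial work, not routine, and is precisely what your write-up omits.

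The paper avoids this detour altogether. After integration by parts and the substitution $y\mapsto yt/j$, the kernel $y\mapsto -V_{j_u-1}(t(1-y/j))$ (suitably normalized) is a bounded nondecreasing function, and Lemma \ref{iks2013} — an ``$\int x_n\,{\rm d}y_n\to\int x\,{\rm d}y$'' statement valid under $J_1$-convergence of $x_n$ and mere \emph{pointwise} convergence of nondecreasing $y_n$ to a continuous limit — lets one pass to the limit with $V_{j_u-1}$ kept intact. The needed pointwise convergence $(\lfloor ju\rfloor-1)!\,{\tt m}^{\lfloor ju\rfloor-1}t^{-(\lfloor ju\rfloor-1)}V_{\lfloor ju\rfloor-1}(t(1-y/j))\to e^{-uy}$ is exactly \eqref{impo}, a deterministic asymptotic already established under $j=o(t^{1/2})$, so no control of $V_{j_u-1}-p_{j_u-1}$ inside a stochastic integral is ever required. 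The tail $(Tt/j,t]$ is then dispatched by the first-moment bound of Lemma \ref{auxmain5} combined with $\me|N(y)-V(y)|\sim{\tt s}\,{\tt m}^{-3/2}\me|B(1)|\,y^{1/2}$ (Lemma \ref{aux123}), playing the role of your $v\ge\delta$ and large-$w$ estimates. To repair your argument, either carry out the variance estimate for the error integral in full, or — far cleaner — drop the polynomial reduction and invoke Lemma \ref{iks2013} directly on $V_{j_u-1}$, as the paper does.
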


The remainder of the paper is organized as follows. Some technical results are stated and proved in Section \ref{auxstat}. 
Theorem \ref{main4} is then deduced from these in Section \ref{main4pro}. Section \ref{sect:flt} is devoted to proving Theorem \ref{main5}. The proofs of Theorems \ref{main100} and Proposition \ref{precise} are given in Sections \ref{sect:main} and \ref{sect:preci}, respectively.

\section{Auxiliary tools}\label{auxstat}

According to Lorden's inequality we have
\begin{equation}\label{lord}
U(t)-{\tt m}^{-1}t \leq c_0,\quad t\geq 0
\end{equation}
for appropriate constant $c_0>0$ whenever $\E\xi^2<\infty$. 
This implies
\begin{equation}\label{lord1}
V(t)-{\tt m}^{-1}t \leq c_0,\quad t\geq 0
\end{equation}
as a consequence of $V(t)\leq U(t)$ for $t\geq 0$. Assume now that $\me\eta<\infty$. Then
\begin{eqnarray}\label{ineq100}
V(t)-{\tt m}^{-1}t&=&\int_{[0,\,t]}(U(t-y)-{\tt m}^{-1}(t-y)){\rm d}G(y)\notag\\&\hphantom{==}-& {\tt m}^{-1} \int_0^t (1-G(y)){\rm d}y\geq-{\tt m}^{-1}\int_0^t (1-G(y)){\rm d}y\geq
-{\tt m}^{-1}\me\eta
\end{eqnarray}
because, with $\nu(t):=\inf\{k\in\mn: S_k>t\}$ for $t\geq 0$, $U(t)={\tt m}^{-1}\E S_{\nu(t)}\geq {\tt m}^{-1}t$. Here, the equality is just Wald's identity. Summarizing, we have shown that
\begin{equation}\label{lord2}
|V(t)-{\tt m}^{-1}t|\leq c,\quad t\geq 0
\end{equation}
where $c=\max(c_0, {\tt m}^{-1}\me\eta)$. Similarly,
\begin{equation}\label{lord20}
0\leq V(t)-{\tt m}^{-1}\int_0^tG(y){\rm d}y\leq c_0,\quad t\geq 0
\end{equation}
as a consequence of $$0\leq V(t)-{\tt m}^{-1}\int_0^tG(y){\rm d}y=\int_{[0,\,t]}(U(t-y)-{\tt m}^{-1}(t-y)){\rm d}G(y)\leq c_0 G(t)\leq c_0.$$

Proposition \ref{aux5000} is borrowed, for the most part, from Proposition 4.1 in \cite{Buraczewski+Dovgay+Iksanov:2020}. Formula \eqref{impo} which is the only new ingredient follows easily from \eqref{ineq2}.
\begin{assertion}\label{aux5000}
Under the assumptions $\me\xi^2<\infty$ and $\me\eta<\infty$,
\begin{equation}\label{ineq2}
\bigg|V_j(t)-\frac{t^j}{j!{\tt m}^j}\bigg|\leq \sum_{i=0}^{j-1}\binom{j}{i}\frac{c^{j-i}t^i}{i!{\tt m}^i},\quad
j\in\mn,~t\geq 0,
\end{equation}
where ${\tt m}=\me\xi<\infty$ and $c$ is the same as in \eqref{lord2}.

In particular, whenever $j=j(t)\to\infty$ and $j(t)=o(t^{1/2})$ as $t\to\infty$,
\begin{equation}\label{basic}
V_j(t)=\frac{t^j}{j!{\tt m}^j}+O\Big(\frac{jt^{j-1}}{(j-1)!{\tt m}^{j-1}}\Big),\quad t\to\infty
\end{equation}
and
\begin{equation}\label{basic1}
\frac{jt^{j-1}}{(j-1)!{\tt m}^{j-1}}=o\Big(\frac{t^j}{j!{\tt m}^j}\Big),
\end{equation}
whence
\begin{equation}\label{auxma}
V_j(t)~\sim~ \frac{t^j}{j!{\tt m}^j},\quad t\to\infty.
\end{equation}
Furthermore, for all $a>0$,
\begin{equation}\label{impo}
\lim_{t\to\infty}\sup_{y\geq at}\,\Big|\frac{V_j(y)j!{\tt m}^j}{y^j}-1\Big|=0.
\end{equation}
\end{assertion}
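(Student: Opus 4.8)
\emph{Proof strategy.} Inequality \eqref{ineq2} is, up to notation, Proposition~4.1 of \cite{Buraczewski+Dovgay+Iksanov:2020}, and I would invoke it directly: its proof is an induction on $j$ built on the recursion $V_j=V_{j-1}\ast V$ and on the two-sided linear bounds for $V$ recorded in \eqref{lord2} and \eqref{lord20}, and it uses no arithmeticity property of $\xi$ or $\eta$, so it transfers verbatim to the present setting. In outline, writing $f_j(t):=t^j/(j!{\tt m}^j)$: the case $j=1$ is precisely \eqref{lord2}; for the step $j-1\to j$ one puts $V=L+R$ with $L(t)={\tt m}^{-1}t$ and $|R|\le c$, inserts the inductive bound $V_{j-1}=f_{j-1}+D_{j-1}$, uses the identity $f_{j-1}\ast L=f_j$, controls $f_{j-1}\ast R$ and $D_{j-1}\ast V$ by $c$ times explicit linear-in-$V$ quantities (integration by parts, using $f_{j-1}(0)=0$), and reorganises the binomial coefficients.

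Granting \eqref{ineq2}, I would derive \eqref{basic}, \eqref{basic1} and \eqref{auxma} by an elementary estimate of the sum on the right of \eqref{ineq2}. Writing $a_i:=\binom{j}{i}c^{j-i}t^i/(i!{\tt m}^i)$ for its summands, a direct computation gives $a_i/a_{i+1}=c{\tt m}(i+1)^2/(t(j-i))\le c{\tt m}(j-1)^2/t$ for $0\le i\le j-2$; since $j=j(t)=o(t^{1/2})$ this ratio is $\le 1/2$ for all large $t$, whence $\sum_{i=0}^{j-1}a_i\le 2a_{j-1}=2c\,jt^{j-1}/((j-1)!{\tt m}^{j-1})$, which is \eqref{basic}. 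Statement \eqref{basic1} is the observation that the ratio of $jt^{j-1}/((j-1)!{\tt m}^{j-1})$ to $t^j/(j!{\tt m}^j)$ equals $j^2{\tt m}/t\to 0$ (again by $j=o(t^{1/2})$), and \eqref{auxma} follows by combining \eqref{basic} with \eqref{basic1}.

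The only genuinely new assertion is \eqref{impo}. Here the plan is to apply \eqref{ineq2} with an arbitrary $y>0$ in place of $t$, divide through by $y^j/(j!{\tt m}^j)$, and re-index the resulting sum via $k=j-i$:
\[
\Big|\frac{V_j(y)\,j!\,{\tt m}^j}{y^j}-1\Big|\le\sum_{i=0}^{j-1}\binom{j}{i}\frac{j!}{i!}\Big(\frac{c{\tt m}}{y}\Big)^{j-i}=\sum_{k=1}^{j}\binom{j}{k}\frac{j!}{(j-k)!}\Big(\frac{c{\tt m}}{y}\Big)^{k}\le\sum_{k\ge 1}\frac{1}{k!}\Big(\frac{c{\tt m}\,j^2}{y}\Big)^{k}=e^{c{\tt m}j^2/y}-1,
\]
where the last inequality uses $\binom{j}{k}\le j^k/k!$ and $j!/(j-k)!\le j^k$. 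The right-hand side is nonincreasing in $y$, so for $y\ge at$ it is bounded by $e^{c{\tt m}j(t)^2/(at)}-1$, which tends to $0$ as $t\to\infty$ because $j(t)=o(t^{1/2})$ forces $j(t)^2/t\to 0$. Taking the supremum over $y\ge at$ in the displayed estimate then yields \eqref{impo}.

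I do not anticipate any real obstacle: the substantive ingredient is \eqref{ineq2}, which is exactly why it is quoted from \cite{Buraczewski+Dovgay+Iksanov:2020}, and everything downstream of it is a short computation. The single point that is not entirely automatic is the uniformity over the half-line $\{y\ge at\}$ in \eqref{impo}, and this is disposed of immediately by the monotonicity of $y\mapsto j^2/y$.
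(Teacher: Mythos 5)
Your proof is correct and follows the paper's own route. The paper simply cites Proposition~4.1 of \cite{Buraczewski+Dovgay+Iksanov:2020} for \eqref{ineq2}--\eqref{auxma} and remarks that the only new item \eqref{impo} ``follows easily from \eqref{ineq2}''; your calculation---dividing \eqref{ineq2} at argument $y$ by $y^j/(j!{\tt m}^j)$, bounding the resulting sum by $e^{c{\tt m}j^2/y}-1$ via $\binom{j}{k}\le j^k/k!$ and $j!/(j-k)!\le j^k$, and using monotonicity in $y$ to get uniformity over $y\ge at$---is exactly that omitted derivation spelled out.
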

Asymptotic results collected in Lemma \ref{estim} are extensively used in various proofs, the proof of Theorem \ref{main4} being the most active consumer.
\begin{lemma}\label{estim}
Let $j\in\mn$ and $s\geq 0$ satisfy $s\geq 2c{\tt m}j^2$. Then, for $1\leq k\leq j$,
\begin{equation}\label{vk}
V_k(s)\leq \frac{2s^k}{k!{\tt m}^k},
\end{equation}
\begin{equation}\label{basic123}
\sum_{i=0}^{k-1}\binom{k}{i}\frac{c^{k-i}s^i}{i!{\tt m}^i}\leq \frac{2cks^{k-1}}{(k-1)!{\tt m}^{k-1}}.
\end{equation}
and
\begin{equation}\label{basic12312}
\sum_{i=0}^{k-1}\binom{k}{i}\frac{c^{k-i}s^{i+1}}{(i+1)!{\tt m}^{i+1}}\leq \frac{2cs^k}{(k-1)!{\tt m}^k}.
\end{equation}
Also, whenever $j=j(t)=o(t^{1/2})$ as $t\to\infty$, for any constant $a>0$,
\begin{equation}\label{inter4}
\lim_{t\to\infty}\frac{(j-1)!{\tt m}^{j-1}}{jt^{j-1}}\sum_{i=0}^{j-2}\binom{j}{i}\frac{a^{j-i}t^i}{i!{\tt m}^i}=0\quad\text{and}\quad \lim_{t\to\infty}\frac{(j-1)!{\tt m}^{j-1}}{jt^{j-1}}(j-1)\sum_{i=0}^{j-2}\binom{j-2}{i}\frac{a^{j-2-i}t^i}{i!{\tt m}^i}=0.
\end{equation}
\end{lemma}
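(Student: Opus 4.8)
The plan is to reduce all four displays of Lemma~\ref{estim} to one elementary observation: \emph{if $x_0,x_1,\dots,x_m$ are positive reals with $x_i\le\tfrac12 x_{i+1}$ for every $0\le i\le m-1$, then $\sum_{i=0}^m x_i\le 2x_m$} --- that is, a sum of positive numbers whose consecutive ratios decay at least geometrically is at most twice its last (and largest) term. In each case one picks the obvious $x_i$, computes the ratio $x_i/x_{i+1}$ explicitly, and checks that the standing quantitative hypothesis forces it to be $\le\tfrac12$.

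For \eqref{basic123} I would set $x_i:=\binom{k}{i}\frac{c^{k-i}s^i}{i!{\tt m}^i}$ for $0\le i\le k-1$; a direct computation gives $x_i/x_{i+1}=\frac{(i+1)^2 c{\tt m}}{(k-i)s}$, and for $i\le k-2$ one has $(i+1)^2\le j^2$ (as $i+1\le k-1<j$) and $k-i\ge 1$, so $s\ge 2c{\tt m}j^2$ yields $x_i/x_{i+1}\le\tfrac12$. Hence $\sum_{i=0}^{k-1}x_i\le 2x_{k-1}=\frac{2cks^{k-1}}{(k-1)!{\tt m}^{k-1}}$, which is \eqref{basic123}. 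Display \eqref{basic12312} is obtained in exactly the same manner with $x_i:=\binom{k}{i}\frac{c^{k-i}s^{i+1}}{(i+1)!{\tt m}^{i+1}}$, for which $x_i/x_{i+1}=\frac{(i+1)(i+2)c{\tt m}}{(k-i)s}\le\frac{j^2c{\tt m}}{s}\le\tfrac12$, so that $\sum_{i=0}^{k-1}x_i\le 2x_{k-1}=\frac{2cs^k}{(k-1)!{\tt m}^k}$. Finally \eqref{vk} follows at once by combining \eqref{ineq2} with \eqref{basic123}: $V_k(s)\le\frac{s^k}{k!{\tt m}^k}+\frac{2cks^{k-1}}{(k-1)!{\tt m}^{k-1}}$, and here the second summand is $\le\frac{s^k}{k!{\tt m}^k}$ precisely when $s\ge 2c{\tt m}k^2$, which holds since $k\le j$ and $s\ge 2c{\tt m}j^2$.

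For \eqref{inter4} I would fix $a>0$ and work with $t$ so large that $j^2a{\tt m}\le t/2$, which is legitimate because $j=j(t)\to\infty$ and $j(t)=o(t^{1/2})$. For the first limit take $x_i:=\binom{j}{i}\frac{a^{j-i}t^i}{i!{\tt m}^i}$, $0\le i\le j-2$; then $x_i/x_{i+1}=\frac{(i+1)^2a{\tt m}}{(j-i)t}\le\frac{j^2a{\tt m}}{t}\le\tfrac12$ for $i\le j-3$, hence $\sum_{i=0}^{j-2}x_i\le 2x_{j-2}=\frac{j(j-1)a^2t^{j-2}}{(j-2)!{\tt m}^{j-2}}$, and multiplying by $\frac{(j-1)!{\tt m}^{j-1}}{jt^{j-1}}$ leaves the bound $\frac{a^2{\tt m}(j-1)^2}{t}$, which tends to $0$ since $j=o(t^{1/2})$. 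The second limit is entirely analogous with $x_i:=\binom{j-2}{i}\frac{a^{j-2-i}t^i}{i!{\tt m}^i}$: one gets $x_i/x_{i+1}=\frac{(i+1)^2a{\tt m}}{(j-2-i)t}\le\frac{j^2a{\tt m}}{t}\le\tfrac12$ for $i\le j-3$, therefore $\sum_{i=0}^{j-2}x_i\le 2x_{j-2}=\frac{2t^{j-2}}{(j-2)!{\tt m}^{j-2}}$, and $\frac{(j-1)!{\tt m}^{j-1}}{jt^{j-1}}\,(j-1)\,\frac{2t^{j-2}}{(j-2)!{\tt m}^{j-2}}=\frac{2{\tt m}(j-1)^2}{jt}\to 0$.

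All of the above is routine, so I do not expect a genuine obstacle; the one point that really needs care is the last accounting in \eqref{inter4}, where the surviving factor $(j-1)^2/t$ makes transparent that $j=o(t^{1/2})$ is exactly the hypothesis required --- weaker than the $j=o(t^{1/3})$ used in \cite{Buraczewski+Dovgay+Iksanov:2020}. No separate discussion of small $k$ (resp.\ small $j$) is needed, since the sums involved are always nonempty and the ratio conditions are either vacuous or trivially satisfied.
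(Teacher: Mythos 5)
Your argument is correct, and the consecutive-ratio observation ($x_i\le\frac12 x_{i+1}\Rightarrow\sum x_i\le 2x_m$) is just a repackaging of the paper's own geometric-series bound, which uses $\binom{k}{i}\le k^{k-i}$ to dominate each summand by a power of $c{\tt m}k^2/s\le 1/2$. The only cosmetic differences are that the paper proves \eqref{vk} by showing the error sum is at most $\frac{s^k}{k!{\tt m}^k}$ directly rather than routing through \eqref{basic123}, and cites \cite{Buraczewski+Dovgay+Iksanov:2020} for \eqref{basic123} where you give a self-contained derivation.
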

\begin{proof}
In view of \eqref{ineq2}, to prove \eqref{vk} it suffices to show that
$$
\sum_{i=0}^{k-1}\binom{k}{i}\frac{c^{k-i}s^i}{i!{\tt m}^i}\leq \frac{s^k}{k!{\tt m}^k},\quad 1\leq k\leq j,\quad s\geq 2c{\tt m}j^2.
$$
Using
\begin{equation}\label{eq:bin}
\binom{k}{i}\leq \frac{k!}{i!}\leq k^{k-i}
\end{equation}
this follows from
\begin{equation}\label{aux1}
\frac{k!{\tt m}^k}{s^k}\sum_{i=0}^{k-1}\binom{k}{i}\frac{c^{k-i}s^i}{i!{\tt m}^i}\leq \sum_{i=0}^{k-1}\left(\frac{c{\tt m}k^2}{s}\right)^{k-i} \leq \sum_{i=0}^{k-1}\left(\frac{c{\tt m}k^2}{2c{\tt m}j^2}\right)^{k-i}=\sum_{i=1}^{k}\left(\frac{k^2}{2j^2}\right)^{i}\leq \sum_{i=1}^{\infty}2^{-i}=1,
\end{equation}
because $k\leq j$.

While inequality \eqref{basic123} was proved in Proposition 4.1 of \cite{Buraczewski+Dovgay+Iksanov:2020}, the proof of inequality \eqref{basic12312} is analogous.

We only check the first part of \eqref{inter4}, the proof of the other part being similar. Invoking \eqref{eq:bin} we obtain $$\frac{(j-1)!{\tt m}^{j-1}}{jt^{j-1}}\sum_{i=0}^{j-2}\binom{j}{i}\frac{a^{j-i}t^i}{i!{\tt m}^i}\leq a\sum_{i=0}^{j-2}\Big(\frac{aj^2}{{\tt m}t}\Big)^{j-1-i}\leq
\frac{(aj)^2}{{\tt m}t}\Big(1-\frac{aj^2}{{\tt m}t}\Big)^{-1}~\to 0,\quad t\to\infty.$$
\end{proof}

Our next technical result is needed for the proof of Theorem \ref{main5}.
\begin{lemma}\label{auxmain5}
Assume that $\me\xi^2<\infty$ and $\me\eta<\infty$. Let $j=j(t)\to\infty$ and $j(t)=o(t^{1/2})$ as $t\to\infty$. Then
$$\lim_{T\to\infty}\limsup_{t\to\infty}\frac{j^{1/2}(j-1)!{\tt m}^j}{t^{j-1/2}}\int_{(Tt/j,\,t]} y^{1/2}{\rm d}_y(-V_{j-1}(t-y))=0.$$
\end{lemma}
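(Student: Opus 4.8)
The plan is to use one integration by parts, followed by the changes of variable $z=t-y$, then $x=z/t$ and $u=j(1-x)$, which turn the quantity in question into a bound by the tail of the convergent integral $\int_0^{\infty}e^{-u/2}u^{-1/2}\,{\rm d}u$; that tail vanishes as $T\to\infty$. Fix $T>0$, put $b:=t(1-T/j)$ and note that for $t$ large (so that $j=j(t)\geq 2T$) we have $b\geq t/2$. Write $A_t:=j^{1/2}(j-1)!{\tt m}^{j}t^{-(j-1/2)}$ for the normalising factor. Since $y\mapsto y^{1/2}$ is continuous and $y\mapsto -V_{j-1}(t-y)$ is nondecreasing of locally bounded variation, integration by parts gives
\[
\int_{(Tt/j,\,t]}y^{1/2}\,{\rm d}_y\bigl(-V_{j-1}(t-y)\bigr)=(Tt/j)^{1/2}V_{j-1}(b)-t^{1/2}V_{j-1}(0)+\frac12\int_{Tt/j}^{t}V_{j-1}(t-y)\,y^{-1/2}\,{\rm d}y.
\]
Dropping the nonpositive middle term and substituting $z=t-y$ in the last integral, it suffices to bound $A_t(Tt/j)^{1/2}V_{j-1}(b)$ and $\tfrac12 A_t\int_{[0,\,b)}V_{j-1}(z)(t-z)^{-1/2}\,{\rm d}z$.

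The one nontrivial input is \eqref{impo} with $j-1$ in place of $j$ (admissible because $j-1\to\infty$ and $j-1=o(t^{1/2})$ as well): for all sufficiently large $t$ one has $V_{j-1}(y)\leq 2y^{j-1}/((j-1)!{\tt m}^{j-1})$ uniformly in $y\geq t/2$. Applied at $y=b\geq t/2$ this yields $A_t(Tt/j)^{1/2}V_{j-1}(b)\leq 2T^{1/2}{\tt m}(1-T/j)^{j-1}$, whose $\limsup$ as $t\to\infty$ equals $2T^{1/2}{\tt m}e^{-T}$, since $j\to\infty$. For the integral term I split $[0,b)$ at $t/2$: on $[0,t/2)$ I bound $V_{j-1}(z)\leq V_{j-1}(t/2)$ by monotonicity, apply the above bound at $t/2$, and use $\int_0^{t/2}(t-z)^{-1/2}{\rm d}z\leq 2t^{1/2}$; after multiplication by $A_t$ this piece is at most a constant times ${\tt m}\,j^{1/2}2^{-(j-1)}$, hence tends to $0$ as $t\to\infty$ irrespective of $T$.

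The main part is $\tfrac12 A_t\int_{[t/2,\,b)}V_{j-1}(z)(t-z)^{-1/2}{\rm d}z$. Here $z\geq t/2$, so the bound $V_{j-1}(z)\leq 2z^{j-1}/((j-1)!{\tt m}^{j-1})$ applies directly; the substitution $z=tx$ then shows this piece is at most ${\tt m}\,j^{1/2}\int_{1/2}^{1-T/j}x^{j-1}(1-x)^{-1/2}{\rm d}x$, and the further substitution $x=1-u/j$ rewrites the integral as $j^{-1/2}\int_T^{j/2}(1-u/j)^{j-1}u^{-1/2}{\rm d}u$, which, by $(1-u/j)^{j-1}\leq e^{-u(j-1)/j}\leq e^{-u/2}$ (valid for $j\geq 2$, $u\geq 0$), is at most $j^{-1/2}\int_T^{\infty}e^{-u/2}u^{-1/2}{\rm d}u$. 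Thus this piece is $\leq{\tt m}\int_T^{\infty}e^{-u/2}u^{-1/2}{\rm d}u$ for all large $t$. Combining the three estimates,
\[
\limsup_{t\to\infty}A_t\int_{(Tt/j,\,t]}y^{1/2}\,{\rm d}_y\bigl(-V_{j-1}(t-y)\bigr)\leq 2T^{1/2}{\tt m}e^{-T}+{\tt m}\int_T^{\infty}e^{-u/2}u^{-1/2}\,{\rm d}u,
\]
and since $\int_0^{\infty}e^{-u/2}u^{-1/2}{\rm d}u<\infty$, letting $T\to\infty$ gives the claim. I expect the only real difficulty to be structural rather than computational: replacing $y^{1/2}$ by its maximum $t^{1/2}$ over the range of integration is too wasteful—it leaves a stray factor $j^{1/2}\to\infty$—so one genuinely must integrate by parts and recognise the rescaled integral as a tail of a single convergent integral, the normalisation $j^{1/2}$ being tuned precisely to the Beta asymptotics $B(j,1/2)\sim\sqrt{\pi}\,j^{-1/2}$.
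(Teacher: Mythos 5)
Your proof is correct. The overall strategy coincides with the paper's: integrate by parts once, evaluate the boundary term via \eqref{impo}, and rescale the remaining integral to a Gamma-type integral whose tail vanishes as $T\to\infty$. (The extra $-t^{1/2}V_{j-1}(0)$ you retain and then drop is in fact identically zero for $j\geq 2$, since $T_i>0$ a.s.\ implies $V(0)=0$.) The only substantive difference is in how the residual integral $\tfrac12\int_{Tt/j}^{t}V_{j-1}(t-y)\,y^{-1/2}\,{\rm d}y$ is controlled. The paper applies the two-sided bound \eqref{ineq2} to split $V_{j-1}(t-y)$ into its leading term $(t-y)^{j-1}/((j-1)!{\tt m}^{j-1})$, whose rescaled integral converges by dominated convergence to ${\tt m}\int_T^\infty e^{-y}y^{-1/2}{\rm d}y$, plus an error term $B_j(t)$ that is driven to zero with the help of \eqref{eq:bin} and \eqref{basic123}. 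You instead change variables to $z=t-y$ and split the range at $t/2$: on $z<t/2$ a crude monotonicity bound plus $\int_0^{t/2}(t-z)^{-1/2}{\rm d}z\leq 2t^{1/2}$ produces the factor $j^{1/2}2^{-(j-1)}\to 0$, and on $z\geq t/2$ you apply \eqref{impo} directly and bound $(1-u/j)^{j-1}\leq e^{-u/2}$ to land on the tail ${\tt m}\int_T^\infty e^{-u/2}u^{-1/2}{\rm d}u$. Your route is a bit more self-contained, needing only \eqref{impo} and elementary inequalities and bypassing the \eqref{ineq2}/\eqref{basic123} bookkeeping; the price is a marginally weaker dominating tail ($e^{-u/2}$ versus $e^{-u}$), which is of course irrelevant for the conclusion.
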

\begin{proof}
Integration by parts yields $$\int_{(Tt/j,\,t]} y^{1/2}{\rm d}_y(-V_{j-1}(t-y))=V_{j-1}(t(1-T/j))(Tt/j)^{1/2}+2^{-1}\int_{Tt/j}^t V_{j-1}(t-y) y^{-1/2}{\rm d}y.$$ By \eqref{impo}, $$\frac{j^{1/2}(j-1)!{\tt m}^j}{t^{j-1/2}}V_{j-1}(t(1-T/j))(Tt/j)^{1/2}~\sim~ {\tt m}T^{1/2}(1-T/j)^{j-1}~\to~{\tt m}T^{1/2}e^{-T},\quad t\to\infty.$$ The right-hand side converges to $0$ as $T\to\infty$. Using \eqref{ineq2} we obtain
\begin{multline*}
\frac{j^{1/2}(j-1)!{\tt m}^j}{t^{j-1/2}} \int_{Tt/j}^t V_{j-1}(t-y)y^{-1/2}{\rm d}y\leq
\frac{j^{1/2}{\tt m}}{t^{j-1/2}}\int_{Tt/j}^t (t-y)^{j-1} y^{-1/2}{\rm d}y\\+\frac{j^{1/2}(j-1)!{\tt m}^j}{t^{j-1/2}}\sum_{i=0}^{j-2}\binom{j-1}{i}\frac{c^{j-1-i}}{i!{\tt m}^i}
\int_{Tt/j}^t (t-y)^i y^{-1/2}{\rm d}y =:A_j(t)+B_j(t).
\end{multline*}
Further, by Lebesgue's dominated convergence theorem,
$$A_j(t)={\tt m}\int_T^j (1-y/j)^{j-1} y^{-1/2}{\rm d}y~\to~ {\tt m}\int_T^\infty e^{-y}y^{-1/2}{\rm d}y,\quad t\to\infty,$$ and the right-hand side converges to $0$ as $T\to\infty$. It suffices to show that $\lim_{t\to\infty}\,B_j(t)=0$. 
This is done as follows:
\begin{multline*}
B_j(t)\leq T^{-1/2}\frac{j(j-1)!{\tt m}^j}{t^j}\sum_{i=0}^{j-2}\binom{j-1}{i}\frac{c^{j-1-i}t^{i+1}}{(i+1)!{\tt m}^i}(1-T/j)^{i+1}\leq T^{-1/2}{\tt m}\sum_{i=0}^{j-2}\Big(\frac{c{\tt m}j^2}{t}\Big)^{j-1-i}\\ \leq \frac{c{\tt m}^2j^2}{T^{1/2}t}\Big(1-\frac{c{\tt m}j^2}{t}\Big)^{-1}~\to~0,\quad t\to\infty.
\end{multline*}
We have used \eqref{eq:bin} for the second inequality.
\end{proof}

Lemmas \ref{key1} and \ref{key2} provide `light versions' of the key renewal theorem for the intermediate generations.
\begin{lemma}\label{key1}
Let $f: [0,\infty)\to [0,\infty)$ be a directly Riemann integrable (dRi) function on $[0,\infty)$ and $j=j(t)\to\infty$, $j(t)=o(t^{1/2})$ as $t\to\infty$. Assume that ${\tt m}<\infty$.
Then
\begin{equation}\label{equival}
(f\ast V_{j})(t)=\int_{[0,\,t]}f(t-y){\rm d}V_j(y)=O(V_{j-1}(t))=O\Big(\frac{t^{j-1}}{(j-1)!{\tt m}^j}\Big),\quad t\to\infty.
\end{equation}
\end{lemma}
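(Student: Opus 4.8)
The plan is to exploit the convolution identity $V_j=V^{\ast j}$ together with the classical fact that convolving a directly Riemann integrable function with the renewal measure produces a bounded function. The first step, and the only delicate one, is to show that for every dRi $f\geq 0$ the function $g:=f\ast U$ is bounded on $[0,\infty)$. Since $\xi>0$ a.s., $U(h)<\infty$ for each $h>0$, and the elementary increment bound $U(a+h)-U(a)\leq U(h)$ holds for all $a\geq 0$ for an \emph{arbitrary} (possibly lattice) distribution of $\xi$. Partitioning $[0,\infty)$ into the intervals $[nh,(n+1)h)$, $n\geq 0$, and estimating $f$ on each of them by its supremum, one obtains $\sup_{t\geq 0}g(t)\leq U(h)\sum_{n\geq 0}\sup_{y\in[nh,\,(n+1)h)}f(y)<\infty$, the series being finite because $f$ is dRi. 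I stress that here only the boundedness of $f\ast U$ is used, not its limit, so that neither Blackwell's theorem nor the key renewal theorem enters; this is precisely what makes it possible to dispense with the nonlatticeness assumption.

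Next I would pass from $U$ to $V$ and then to $V_j$. Since $V=U\ast G$ with $G$ a proper distribution function, associativity of the Lebesgue--Stieltjes convolution of nonnegative kernels gives $f\ast V=g\ast G$, so $(f\ast V)(t)=\int_{[0,\,t]}g(t-y)\,{\rm d}G(y)\leq \big(\sup_{s\geq 0}g(s)\big)\,G(t)=:C<\infty$ for all $t\geq 0$. Applying associativity once more to $V_j=V\ast V_{j-1}$ yields $f\ast V_j=(f\ast V)\ast V_{j-1}$, and, since $V_{j-1}$ is nondecreasing with $V_{j-1}(0)=0$, this gives $(f\ast V_j)(t)=\int_{[0,\,t]}(f\ast V)(t-y)\,{\rm d}V_{j-1}(y)\leq C\,V_{j-1}(t)$ for every $t\geq 0$. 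Hence $(f\ast V_j)(t)=O(V_{j-1}(t))$ as $t\to\infty$, uniformly in the choice of $j$, in particular for $j=j(t)$.

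It remains to identify $V_{j-1}(t)$ asymptotically, and here Proposition \ref{aux5000} does all the work: if $j(t)\to\infty$ and $j(t)=o(t^{1/2})$ then $j(t)-1$ enjoys the same properties, so relation \eqref{auxma}, applied with $j$ replaced by $j-1$, gives $V_{j-1}(t)\sim t^{j-1}/((j-1)!\,{\tt m}^{j-1})=O\big(t^{j-1}/((j-1)!\,{\tt m}^{j})\big)$, since ${\tt m}>0$ is a fixed constant. The main obstacle is therefore concentrated entirely in the first step, namely the uniform boundedness of $f\ast U$ for dRi $f$ in a setting that allows $\xi$ to be lattice; this is a standard renewal-theoretic estimate (see, e.g., the monograph \cite{Iksanov:2016}), and once it is in hand the remainder is a two-line manipulation of convolutions followed by a reference to Proposition \ref{aux5000}.
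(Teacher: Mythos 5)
Your proof is correct and follows essentially the same route as the paper's: both bound $f\ast V$ by a constant using subadditivity of the renewal function together with the dRi partition estimate (the paper does this directly for $V$ in Lemma~\ref{key}; your small detour through $f\ast U$ and then $V=U\ast G$ is an equivalent variant), then write $f\ast V_j=(f\ast V)\ast V_{j-1}$ and apply Proposition~\ref{aux5000} for the asymptotics of $V_{j-1}$.
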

\begin{proof}
For $t\geq 0$, put $g(t):=\int_{[0,\,t]}f(t-y){\rm d}V(y)$. Then, for $t\geq 0$,
$$
(f\ast V_{j})(t)=(f\ast (V\ast V_{j-1}))(t)=((f\ast V)\ast V_{j-1})(t)=(g\ast V_{j-1})(t)=\int_{[0,\,t]}g(t-y){\rm d}V_{j-1}(y).
$$
By Lemma \ref{key}, $g(t)\leq a$ for some $a>0$ and all $t\geq 0$, and the first asymptotic relation in \eqref{equival} follows. The second is a consequence of \eqref{impo}.
\end{proof}

\begin{lemma}\label{key2}
Let $j=j(t)\to\infty$ and $j(t)=o(t^{1/2})$ as $t\to\infty$ and assume that ${\tt m}<\infty$. Then
\begin{equation*}
\int_{(t,\,\infty)}e^{t-y}{\rm d}V_j(y)=O(V_{j-1}(t))=O\Big(\frac{t^{j-1}}{(j-1)!{\tt m}^j}\Big),\quad t\to\infty.
\end{equation*}
\end{lemma}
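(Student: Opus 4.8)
The plan is to peel off the last step of the branching random walk: using ${\rm d}V_j={\rm d}V_{j-1}\ast{\rm d}V$ for $j\ge 2$ (with ${\rm d}V_1={\rm d}V$ and $V_0\equiv 1$ on $[0,\infty)$, i.e.\ ${\rm d}V_0=\delta_0$), I would reduce the integral over $(t,\infty)$ for index $j$ to the same integral for index $j-1$ plus a term of the target order $V_{j-1}(t)$, and then unroll the resulting recursion. This is preferable to the naive route of replacing $V_j$ by $t^{j}/(j!{\tt m}^{j})$ and estimating the remainder through \eqref{ineq2}: the bound on that remainder is of order $jt^{j-1}/((j-1)!{\tt m}^{j-1})$, i.e.\ a factor $j$ larger than the target, so a one-shot comparison cannot work and one must respect the self-similar structure of the walk.

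First I would record two uniform bounds. Put $\kappa:=\int_{[0,\infty)}e^{-w}{\rm d}V(w)$ and, for $u\ge 0$, $\phi(u):=\int_{(u,\infty)}e^{u-w}{\rm d}V(w)$. Since $V(0)=0$, integration by parts gives $\kappa=\int_0^\infty e^{-w}V(w){\rm d}w\le\int_0^\infty e^{-w}({\tt m}^{-1}w+c){\rm d}w={\tt m}^{-1}+c<\infty$ by \eqref{lord2}; and, substituting $w=u+v$ and integrating by parts again, $\phi(u)=\int_0^\infty e^{-v}\bigl(V(u+v)-V(u)\bigr){\rm d}v\le\int_0^\infty e^{-v}({\tt m}^{-1}v+2c){\rm d}v={\tt m}^{-1}+2c=:\kappa'$ uniformly in $u\ge 0$. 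Writing ${\rm d}V_j({\rm d}y)=\int_{[0,\infty)}{\rm d}V({\rm d}y-z)\,{\rm d}V_{j-1}(z)$ and applying Fubini, I split the range of $z$ at $t$: the inner integral equals $\phi(t-z)$ if $z\le t$ and $\kappa e^{t-z}$ if $z>t$. Hence, with $I_j(t):=\int_{(t,\infty)}e^{t-y}{\rm d}V_j(y)$,
\[
I_j(t)=\int_{[0,t]}\phi(t-z)\,{\rm d}V_{j-1}(z)+\kappa\,I_{j-1}(t)\le\kappa'\,V_{j-1}(t)+\kappa\,I_{j-1}(t),\qquad j\ge 2,
\]
while $I_1(t)=\phi(t)\le\kappa'=\kappa'V_0(t)$. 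Iterating down to $I_1$ yields $I_j(t)\le\kappa'\sum_{k=0}^{j-1}\kappa^{k}V_{j-1-k}(t)$.

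Finally, for $t$ large enough that $t\ge 2c{\tt m}j^{2}$ — possible since $j=o(t^{1/2})$ — \eqref{vk} gives $V_{j-1-k}(t)\le 2t^{j-1-k}/((j-1-k)!{\tt m}^{j-1-k})$ for every $0\le k\le j-1$, so, after the substitution $p=j-1-k$,
\[
I_j(t)\le 2\kappa'\,\frac{t^{j-1}}{(j-1)!{\tt m}^{j-1}}\sum_{p=0}^{j-1}\frac{(j-1)!}{(j-1-p)!}\Bigl(\frac{\kappa{\tt m}}{t}\Bigr)^{p}\le 2\kappa'\,\frac{t^{j-1}}{(j-1)!{\tt m}^{j-1}}\sum_{p\ge 0}\Bigl(\frac{\kappa{\tt m}(j-1)}{t}\Bigr)^{p}.
\]
Since $\kappa{\tt m}(j-1)/t\to 0$ the last series is bounded (by $2$, say, for $t$ large), whence $I_j(t)=O\bigl(t^{j-1}/((j-1)!{\tt m}^{j-1})\bigr)$, which by \eqref{impo} (giving $V_{j-1}(t)\sim t^{j-1}/((j-1)!{\tt m}^{j-1})$) is the asserted $O(V_{j-1}(t))=O\bigl(t^{j-1}/((j-1)!{\tt m}^{j})\bigr)$. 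The one real obstacle is precisely gaining the correct order $V_{j-1}(t)$ in place of $jV_{j-1}(t)$, and this is exactly what the peeling recursion — rather than a direct estimate of $V_j$ via \eqref{ineq2} — is designed to achieve.
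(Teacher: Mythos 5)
Your proof is correct, and its first step is essentially the one in the paper's own argument: the paper writes $h\ast V_j=(h\ast V)\ast V_{j-1}$ with $h(t)=e^t\1_{(-\infty,0]}(t)$ and splits the outer integral at $t$, obtaining in your notation $I_j(t)=\int_{[0,t]}h_1(t-y)\,{\rm d}V_{j-1}(y)+\rho\,I_{j-1}(t)$, where $h_1$ is your $\phi$ and $\rho=\me e^{-\eta}(1-\me e^{-\xi})^{-1}$ is your $\kappa$. After that, the two arguments diverge. The paper stops after a single peeling step: it bounds $h_1$ by a constant via Lemma~\ref{key} (you re-derive this directly from \eqref{lord2}), so the first term is $O(V_{j-1}(t))$, and then it estimates the remaining tail $\rho I_{j-1}(t)$ by integration by parts together with the uniform asymptotic \eqref{impo} for $V_{j-1}(y)$ on $y\geq t$, showing this piece is in fact $o(V_{j-1}(t))$. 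You instead unroll the recursion all the way down to $I_1$, arriving at the closed-form bound $I_j(t)\le\kappa'\sum_{k=0}^{j-1}\kappa^{k}V_{j-1-k}(t)$, and then control the resulting sum with \eqref{vk} and a geometric-series estimate. Both routes are valid and use comparable hypotheses. Your version has the merit of producing a fully explicit bound and of avoiding the integration by parts and the $\varepsilon$-argument around \eqref{impo}; the paper's version is a bit shorter since it stops after one level of peeling.
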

\begin{proof}
Let $h:\mr \to [0,\infty)$ be a dRi function on $(-\infty, 0]$ satisfying $h(t)=0$ for $t>0$. We start as in the proof of Lemma \ref{key1}: for $t\geq 0$,
\begin{multline*}
\int_{(t,\,\infty)}h(t-y){\rm d}V_j(y)=(h\ast V_j)(t)=((h\ast V)\ast V_{j-1})(t)=\int_{\mr}(h\ast V)(t-y){\rm d}V_{j-1}(y)\\
=\int_{[0,\,t]}h_1(t-y){\rm d}V_{j-1}(y)+\int_{(t,\,\infty)}h_2(t-y){\rm d}V_{j-1}(y),
\end{multline*}
where $h_1(t):=\int_{(t,\,\infty)}h(t-y){\rm d}V(y)$ 
and $h_2(t):=\int_{[0,\,\infty)}h(t-y){\rm d}V(y)$ for $t\in\mr$. 
By Lemma \ref{key}, $h_1(t)\leq b$ for some $b>0$ and all $t\geq 0$, whence
$$\int_{[0,\,t]}h_1(t-y){\rm d}V_{j-1}(y)=O(V_{j-1}(t)),\quad t\to\infty.$$ From now on we put $h(t)=e^t\1_{(-\infty, 0]}(t)$ and note that the previous centered formula holds true for such $h$.
Further, setting $\rho:=\me e^{-\eta}(1-\me e^{-\xi})^{-1}$ we obtain $h_2(t)=e^t\int_{[0,\,\infty)}e^{-y}{\rm d}V(y)=\rho e^t$ for $t\leq 0$, whence $$\int_{(t,\,\infty)}h_2(t-y){\rm d}V_{j-1}(y)=\rho \int_{(t,\,\infty)}e^{t-y}{\rm d}V_{j-1}(y)=-\rho V_{j-1}(t)+\rho \int_t^\infty e^{t-y}V_{j-1}(y){\rm d}y=:\rho C_j(t)$$ for $t\geq 0$. In view of \eqref{impo}, given $\varepsilon>0$ we have for large enough $t$
$$0\leq C_j(t)\leq -V_{j-1}(t)+(1+\varepsilon)\int_t^\infty e^{t-y}\frac{y^{j-1}}{(j-1)!{\tt m}^{j-1}}{\rm d}y=-V_{j-1}(t)+\frac{1+\varepsilon}{{\tt m}^{j-1}}\sum_{i=0}^{j-1}\frac{t^i}{i!}.$$ Using $$V_{j-1}(t)~\sim~ \frac{t^{j-1}}{(j-1)!{\tt m}^{j-1}},\quad t\to\infty$$ (see \eqref{impo}) and $$\sum_{i=0}^{j-1}\frac{t^i}{i!}~\sim~ \frac{t^{j-1}}{(j-1)!},\quad t\to\infty$$ and sending first $t\to\infty$ and then $\varepsilon\to 0+$ we infer $$\lim_{t\to\infty}\frac{C_j(t)}{V_{j-1}(t)}=0.$$ The proof of Lemma \ref{key2} is complete.
\end{proof}

\section{Proof of Theorem \ref{main4}}\label{main4pro}

The proof is a simplified version of the proof of Lemma 4.3 in \cite{Buraczewski+Dovgay+Iksanov:2020}. The improvement consists in using asymptotic relations rather than precise formulae.

For $j\in\mn$ and $t\geq 0$, put $D_j(t):={\rm Var}\, N_j(t)$ and $$I_j(t):=\me\bigg(\sum_{r\geq 1}V_{j-1}(t-T_r)\1_{\{T_r\leq t\}}-V_j(t)\bigg)^2$$ with the convention that $V_0(t)=1$ for $t\geq 0$. Our starting point is the recursive formula which is a consequence of \eqref{basic1232}: for $j\geq 2$ and $t\geq 0$,
\begin{eqnarray}\label{aux5}
D_j(t)&=&\me \bigg(\sum_{r\geq 1}\big(N^{(r)}_{j-1}(t-T_r)- V_{j-1}(t-T_r)\big)\1_{\{T_r\leq t\}}\bigg)^2\\&+& \me\bigg(\sum_{r\geq 1}V_{j-1}(t-T_r)
\1_{\{T_r\leq t\}}-V_j(t)\bigg)^2=\int_{[0,\,t]}D_{j-1}(t-y){\rm d}V(y)+I_j(t).\notag
\end{eqnarray}
Starting with $D_1(t)=I_1(t)$ and iterating \eqref{aux5}  we obtain
\begin{equation}\label{recur}
\int_{[0,\,t]}D_{j-1}(t-y){\rm d}V(y)=\sum_{k=1}^{j-1}\int_{[0,\,t]}I_k(t-y){\rm d}V_{j-k}(y),\quad j\geq 2, t\geq 0.
\end{equation}
We intend to prove that whenever $j=j(t)\to\infty$ and $j(t)=o(t^{1/2})$ as $t\to\infty$,
\begin{equation}\label{asymp}
\int_{[0,\,t]}D_{j-1}(t-y){\rm d}V(y)= O\Big(\frac{t^{2j-2}}{(j-2)!(j-1)!{\tt m}^{2j-2}}\Big),\quad t\to\infty.
\end{equation}
To this end we use a two-step procedure. First, we show that $I_j$ is bounded from above by a nonnegative and nondecreasing function $h_j$, say, and that the corresponding inequality is valid for all nonnegative arguments. This leads by virtue of \eqref{recur} to a useful inequality for $D_j$ which holds for all nonnegative arguments. Second, we obtain an upper bound for $h_j$ (and for $D_j$) which is valid for large arguments.

\noindent {\sc Step 1}. Throughout this step all formulae hold true for any $j\in\mn$ and all $t\geq 0$ (we do not indicate this explicitly).

We start with
\begin{align*}
&\hspace{-1cm}\E \sum_{r\geq 2} \sum_{1\leq i<r}V_{j-1}(t-T_i)\1_{\{T_i\leq t\}}V_{j-1}(t-T_r)\1_{\{T_r\leq t\}}\\
&\leq \E \sum_{i\geq 1}\me\big(V_{j-1}(t-T_i)\1_{\{T_i\leq t\}}\big(V_{j-1}(t-\eta_{i+1}-S_i)\1_{\{\eta_{i+1}\leq t-S_i\}}\\
&+V_{j-1}(t-\eta_{i+2}-\xi_{i+1}-S_i)\1_{\{\eta_{i+2}+\xi_{i+1}\leq t-S_i\}}+\ldots\big)|(\xi_k,\eta_k)_{1\leq i\leq k}\big)\1_{\{S_i\leq t\}}\\
&=\E \sum_{i\geq 1} V_{j-1}(t-T_i)\1_{\{T_i\leq t\}} V_j(t-S_i)\1_{\{S_i\leq t\}}\leq \E \sum_{i\geq 0} V_{j-1}(t-S_i)V_j(t-S_i)\1_{\{S_i\leq t\}}.
\end{align*}
Hence,
\begin{multline}\label{259}
I_j(t)=\me \sum_{r\geq 1}V^2_{j-1}(t-T_r)\1_{\{T_r\leq t\}}+2\me\sum_{r\geq 2}\sum_{1\leq i<r} V_{j-1}(t-T_i)\1_{\{T_i\leq t\}}V_{j-1}(t-T_r)\1_{\{T_r\leq t\}}-V^2_j(t)\\
\leq V_{j-1}(t)V_j(t)+2\int_{[0,\,t]}V_{j-1}(t-y)V_j(t-y){\rm d}U(y)- V^2_j(t).
\end{multline}
Put $\tilde U(t):=\sum_{i\geq 1}\mmp\{S_i\leq t\}$ for $t\geq 0$ and note that, in view of \eqref{lord} and Wald's identity, $$|\tilde U(t)-{\tt m}^{-1}t|\leq \tilde c:=c_0+1.$$ With this at hand integration by parts yields
\begin{multline*}
\int_{[0,\,t]}V_{j-1}(t-y)V_j(t-y){\rm d}U(y)=V_{j-1}(t)V_j(t)+\int_{[0,\,t]}V_{j-1}(t-y)V_j(t-y){\rm d}\tilde U(y)\\\leq (\tilde c+1)V_{j-1}(t)V_j(t)+{\tt m}^{-1}\int_0^t V_{j-1}(y)V_j(y){\rm d}y,
\end{multline*}
whence, by \eqref{259}, $$I_j(t)\leq (2\tilde c+3) V_{j-1}(t)V_j(t)+2{\tt m}^{-1}\int_0^t V_{j-1}(y)V_j(y){\rm d}y- V^2_j(t).$$ Invoking \eqref{ineq2} we obtain
\begin{align}
2{\tt m}^{-1}\int_0^t V_{j-1}(y)V_j(y){\rm d}y&\leq 2{\tt m}^{-1}\int_0^t \Big(\frac{y^{j-1}}{(j-1)!{\tt m}^{j-1}}+\sum_{i=0}^{j-2}\binom{j-1}{i}\frac{c^{j-1-i}y^i}{i!{\tt m}^i}\Big)\Big(\frac{y^j}{j!{\tt m}^j}+\sum_{i=0}^{j-1}\binom{j}{i}\frac{c^{j-i}y^i}{i!{\tt m}^i}\Big){\rm d}y\notag\\
&\leq\frac{t^{2j}}{(j!)^2{\tt m}^{2j}}+2\sum_{i=0}^{j-2}\binom{j-1}{i}\frac{c^{j-1-i}t^{j+1+i}}{(j+1+i)j!i!{\tt m}^{j+1+i}}+2 \sum_{i=0}^{j-1}\binom{j}{i}\frac{c^{j-i}t^{j+i}}{(j+i)(j-1)!i!{\tt m}^{j+i}}\notag\\
&+ 2\Big( \sum_{i=0}^{j-2}\binom{j-1}{i}\frac{c^{j-1-i}t^i}{i!{\tt m}^i}\Big)\int_0^t \sum_{i=0}^{j-1}\binom{j}{i}\frac{c^{j-i}y^i}{i!{\tt m}^{i+1}}{\rm d}y\notag\\
&=:\frac{t^{2j}}{(j!)^2{\tt m}^{2j}}+f_j(t)
\label{deffj}
\end{align}
Another appeal to \eqref{ineq2} yields 
\begin{multline*}
V^2_j(t)-\frac{t^{2j}}{(j!)^2{\tt m}^{2j}}=\Big(V_j(t)+\frac{t^j}{j!{\tt m}^j}\Big)\Big(V_j(t)-\frac{t^j}{j!{\tt m}^j}\Big)\geq -\Big(V_j(t)+\frac{t^j}{j!{\tt m}^j}\Big)\sum_{i=0}^{j-1}\binom{j}{i}\frac{c^{j-i}t^i}{i!{\tt m}^i}=:-g_j(t).
\end{multline*}
It is important for what follows that both $f_j$ and $g_j$ are nonnegative nondecreasing functions. Combining pieces together we infer
\begin{equation}\label{defhj}
I_j(t)\leq 
(2\tilde c+3) V_{j-1}(t)V_j(t)+f_j(t)+g_j(t)=:h_j(t) 
\end{equation}
and thereupon
$$D_{j-1}(t)=\sum_{k=1}^{j-1}\int_{[0,\,t]} I_k(t-y){\rm d}V_{j-k-1}(y)\leq h_{j-1}(t) 
+\sum_{k=1}^{j-2}h_k(t) 
V_{j-k-1}(t),\quad j\geq 2,\quad t\geq 0,$$ for $h_j$ is a nondecreasing function.

\noindent {\sc Step 2}. Fix now $j\in\mn$ and $s\geq 0$ satisfying $s\geq 2c{\tt m}j^2$ and let $1\leq k\leq j$. Throughout this step we tacitly assume that all formulae hold true for this range of parameters. By \eqref{vk}, $$V_{k-1}(s)V_k(s)\leq \frac{4s^{2k-1}}{(k-1)!k!{\tt m}^{2k-1}}\leq \frac{4s^{2k-1}}{((k-1)!)^2{\tt m}^{2k-1}}.$$ 
Our next task is to show that $$f_k(s)\leq \frac{12cs^{2k-1}}{((k-1)!)^2{\tt m}^{2k-1}}.$$ While the second and third summands in the definition of $f_k$ (see \eqref{deffj}) are estimated with the help of \eqref{basic123}, the fourth summand is dealt with as follows: 
\begin{multline*}
2\Big( \sum_{i=0}^{k-2}\binom{k-1}{i}\frac{c^{k-1-i}s^i}{i!{\tt m}^i}\Big)\Big(\sum_{i=0}^{k-1}\binom{k}{i}\frac{c^{k-i}s^{i+1}}{(i+1)!{\tt m}^{i+1}}\Big)\\\leq 2\frac{2c(k-1)s^{k-2}}{(k-2)!{\tt m}^{k-2}}\frac{2cs^k}{(k-1)!{\tt m}^k}=\frac{8c^2 s^{2k-2}}{((k-2)!)^2{\tt m}^{2k-2}}\leq \frac{4c s^{2k-1}}{((k-1)!)^2{\tt m}^{2k-1}}.
\end{multline*}
Here, we have used \eqref{basic123} and \eqref{basic12312} to bound the first and second factor, respectively, and the inequality $s\geq 2c{\tt m}(k-1)^2$ for the last passage. Finally, $$g_k(s)\leq \frac{6cs^{2k-1}}{((k-1)!)^2 {\tt m}^{2k-1}}$$ by \eqref{vk} and \eqref{basic123}. Summarizing, we have shown that
\begin{equation}\label{ik}
h_k(s)\leq \frac{As^{2k-1}}{((k-1)!)^2 {\tt m}^{2k-1}},
\end{equation}
where $A:=12+8 \tilde c+18c$.

Further, we obtain, for $s\geq 2\max (c,1){\tt m}j^2=:a_j$,
\begin{align}
D_{j-1}(s)&\leq h_{j-1}(s)+\sum_{k=1}^{j-2}h_k(s)V_{j-k-1}(s)\notag\\
&\leq \frac{As^{2j-3}}{((j-2)!)^2 {\tt m}^{2j-3}}+2A\sum_{k=1}^{j-2}\frac{s^{j-2+k}}{(j-k-1)!((k-1)!)^2 {\tt m}^{j-2+k}}\notag\\
&= \frac{As^{2j-3}}{((j-2)!)^2 {\tt m}^{2j-3}}\Big(1+2 \sum_{k=1}^{j-2}\binom{j-2}{k-1}\frac{(j-2)!}{(k-1)!}\Big(\frac{{\tt m}}{s}\Big)^{j-k-1}\Big)\notag\\
&\leq \frac{As^{2j-3}}{((j-2)!)^2 {\tt m}^{2j-3}}\Big(1+2\frac{{\tt m} j^2}{s}\Big(1-\frac{{\tt m} j^2}{s}\Big)^{-1}  \Big)\leq \frac{3As^{2j-3}}{((j-2)!)^2 {\tt m}^{2j-3}}.\label{dj}
\end{align}
Here, the first inequality is just formula \eqref{defhj}, the second inequality follows from \eqref{vk} and \eqref{ik}, and the penultimate inequality is a consequence of \eqref{eq:bin}.

We are ready to prove relation \eqref{asymp}. To this end, let $j=j(t)\to\infty$ and $j(t)=o(t^{1/2})$ as $t\to\infty$. Noting that the inequality $t\geq a_j$ holds then true for large enough $t$ we obtain
\begin{align*}
\int_{[0,\,t]}D_{j-1}(t-y){\rm d}V(y)&=\int_{[0,\,t-a_j]} D_{j-1}(t-y){\rm d}V(y)+\int_{(t-a_j,\,t]} D_{j-1}(t-y){\rm d}V(y)\\
&\leq \frac{3A}{((j-2)!)^2 {\tt m}^{2j-3}}\int_{[0,\,t]}(t-y)^{2j-3}{\rm d}V(y)+ \left(\max_{s\in [0,\,a_j]}D_{j-1}(s)\right)
U(a_j)\\
&\leq \frac{3At^{2j-2}}{2(j-2)!(j-1)!{\tt m}^{2j-2}}+\frac{3Act^{2j-3}}{((j-2)!)^2 {\tt m}^{2j-3}}+\left(\max_{s\in [0,\,a_j]}D_{j-1}(s)\right)U(a_j)
\end{align*}
having utilized \eqref{dj} and \eqref{subad} for the first inequality and integration by parts together with \eqref{lord2} for the second. 
It is clear that $$\frac{3Act^{2j-3}}{((j-2)!)^2 {\tt m}^{2j-3}}=o\Big(\frac{t^{2j-2}}{(j-2)!(j-1)!{\tt m}^{2j-2}}\Big),\quad t\to\infty.$$ Further,
$$
\max_{s\in [0,\,a_j]}D_{j-1}(s)U(a_j)\leq (D_{j-1}(a_j)+V^2_{j-1}(a_j))U(a_j)\leq \Big(\frac{3Aa_j^{2j-3}}{((j-2)!)^2 {\tt m}^{2j-3}}+\frac{4a_j^{2j-2}}{((j-1)!)^2 {\tt m}^{2j-2}}\Big)U(a_j),
$$
where the second inequality follows from \eqref{vk} and \eqref{dj}. By the elementary renewal theorem, with $a=2\max(c,1){\tt m}$,
$$\frac{(j-2)!(j-1)!{\tt m}^{2j-2}}{t^{2j-2}}\frac{a_j^{2j-2}}{((j-1)!)^2 {\tt m}^{2j-2}} U(a_j)~\sim~\Big(\frac{aj^2}{t}\Big)^{2j-2}\frac{aj}{{\tt m}}~\to~ 0,\quad t\to\infty$$ because $(aj^2/t)^{2j-2}$ converges to $0$ faster than any negative power of $j$. Similarly, $$\frac{(j-2)!(j-1)!{\tt m}^{2j-2}}{t^{2j-2}}\frac{a_j^{2j-3}}{((j-2)!)^2 {\tt m}^{2j-3}} U(a_j)~\sim~\Big(\frac{aj^2}{t}\Big)^{2j-2}j~\to~ 0,\quad t\to\infty.$$ Thus, $$\max_{s\in [0,\,a_j]}D_{j-1}(s)U(a_j)=o\Big(\frac{t^{2j-2}}{(j-2)!(j-1)!{\tt m}^{2j-2}}\Big),$$ and \eqref{asymp} follows.

In view of Markov's inequality and the Cram\'{e}r-Wold device weak convergence of the finite-dimensional distributions to the zero vector in Theorem \ref{main4} follows from
$$\lim_{t\to\infty}\frac{\lfloor j(t)\rfloor ((\lfloor j(t)u\rfloor-1)!)^2}{{\tt m}^{-2\lfloor j(t)u\rfloor} t^{2\lfloor j(t)u\rfloor-1}}\me
\Big(N_{\lfloor j(t)u\rfloor}(t)-\sum_{r\geq 1}V_{\lfloor j(t)u\rfloor-1}(t-T_r)\1_{\{T_r\leq t\}}\Big)^2=0$$ for each $u>0$ which is ensured by \eqref{aux5} and \eqref{asymp}. The proof of Theorem \ref{main4} is complete.

\section{Proof of Theorem \ref{main5}}\label{sect:flt}

We start with a couple of lemmas. As usual, we denote by $D$ the Skorokhod space of right-continuous functions defined on $[0,\infty)$ with finite limits from the left at positive points.
\begin{lemma}\label{lem:flt}
Under the assumptions and notation of Theorem \ref{main5}, as $t\to\infty$,
\begin{equation}\label{eq:flt}
\Big(\frac{N(ut)-V(ut)}{({\tt s}^2 {\tt m}^{-3}t)^{1/2}}\Big)_{u\geq 0}~\Rightarrow~ (B(u))_{u\geq 0}
\end{equation}
in the $J_1$-topology on $D$.
\end{lemma}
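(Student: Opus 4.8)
The plan is to deduce \eqref{eq:flt} from a classical functional central limit theorem for the number of renewals (or rather for the perturbed renewal process $N$) by exploiting the fact that the perturbation $\eta$ has finite mean and therefore does not affect the fluctuations on the diffusive scale. First I would recall that for the zero-delayed ordinary random walk $(S_i)$ with $\mathtt{m}=\me\xi<\infty$ and $\mathtt s^2={\rm Var}\,\xi\in(0,\infty)$, the counting function $\widetilde N(t):=\#\{i\ge 1: S_i\le t\}$ satisfies the renewal FCLT
\begin{equation*}
\Big(\frac{\widetilde N(ut)-\mathtt m^{-1}ut}{(\mathtt s^2\mathtt m^{-3}t)^{1/2}}\Big)_{u\ge 0}~\Rightarrow~(B(u))_{u\ge 0}\quad\text{in the }J_1\text{-topology on }D,
\end{equation*}
which is a standard consequence of Donsker's theorem for $(S_i)$ via the inverse-map/composition argument (see e.g.\ the monograph \cite{Iksanov:2016}). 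Since $\widetilde U(t)=\me\widetilde N(t)$ and $|\widetilde U(t)-\mathtt m^{-1}t|$ is bounded (Lorden's inequality, as already recorded in the excerpt before \eqref{defhj}), one may replace the centering $\mathtt m^{-1}ut$ by $\widetilde V(t):=\me\widetilde N(t)=\widetilde U(t)$ without changing the limit, because a bounded additive term is $o((\mathtt s^2\mathtt m^{-3}t)^{1/2})$ uniformly in $u$ on compacts.

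Next I would pass from $\widetilde N$ to $N(t)=\sum_{i\ge 1}\1_{\{S_{i-1}+\eta_i\le t\}}$. The key observation is that $N$ and $\widetilde N$ differ only through the i.i.d.\ perturbations $\eta_i$ with $\me\eta<\infty$, and a perturbation with finite mean produces fluctuations of order $O_{\mmp}(1)$, hence negligible on the scale $t^{1/2}$. Concretely, I would show
\begin{equation*}
\sup_{u\in[0,\,\Lambda]}\frac{|N(ut)-\widetilde N(ut)-(V(ut)-\widetilde V(ut))|}{(\mathtt s^2\mathtt m^{-3}t)^{1/2}}~\overset{{\rm P}}{\to}~0,\quad t\to\infty,
\end{equation*}
for each $\Lambda>0$. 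For this one uses that $N(t)-\widetilde N(t)=\sum_{i\ge 1}(\1_{\{S_{i-1}+\eta_i\le t\}}-\1_{\{S_{i-1}\le t\}})$ is, by exchanging the roles of the delayed/ordinary walks, itself a difference of a delayed renewal count and an ordinary one; monotonicity in $t$ and the finiteness of $\me\eta$ bound this difference in probability uniformly on $[0,\Lambda t]$ — for instance via the elementary estimate $\me\,|N(t)-\widetilde N(t)|\le\me\,\widetilde U(\eta)+\text{const}$ combined with a maximal inequality, or simply by noting $N$ and $\widetilde N$ are both sandwiched between shifts of one another by $O_\mmp(1)$ renewal steps. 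Since $\mathtt m^{-1}\eta$ has finite mean, the resulting fluctuation is $O_\mmp(1)$, which is $o(t^{1/2})$. Then \eqref{eq:flt} follows from the $\widetilde N$-FCLT together with this uniform negligibility, upon using Slutsky's theorem in the $J_1$-topology (the perturbation term does not create new jumps on the limiting scale, so $J_1$-convergence is preserved).

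The main obstacle I anticipate is the uniform-in-$u$ control of the discrepancy $N(ut)-\widetilde N(ut)$ on the full interval $[0,\Lambda t]$ rather than at a fixed point: one needs that the $O_\mmp(1)$ bound holds simultaneously for all $u$, which requires either a maximal-type inequality for the monotone process $t\mapsto N(t)-\widetilde N(t)$ or an argument coupling $N$ and $\widetilde N$ pathwise so that $\widetilde N(t-\text{const})\le N(t)\le\widetilde N(t+\text{const})$ fails in general but a probabilistic version with a tight shift does hold. I would handle this by writing, for each realization, $N(t)=\sum_{i\ge1}\1_{\{T_i\le t\}}$ with $T_i=S_{i-1}+\eta_i$, observing $T_i\le S_{i-1}+\eta_i$ and $T_i\ge S_{i-1}$, so that $\widetilde N(t)\le N(t)\le \widetilde N'(t):=\#\{i:S_{i-1}+\max_{k\le i}\eta_k\le t\}$ is too crude; instead the clean route is to note that $N(t)-\widetilde N(t)$ and $V(t)-\widetilde V(t)$ are each uniformly bounded in $L^1$ by a constant depending only on $\me\eta$ and $\mathtt m$ (using \eqref{lord} and \eqref{equ}), and a one-dimensional $L^1$ bound upgrades to a uniform-in-$u$ statement by monotonicity of $u\mapsto N(ut)-\widetilde N(ut)$ in the argument together with a finite-mesh approximation of $[0,\Lambda]$. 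This is routine once the right monotonicity/boundedness facts are isolated, so no genuinely new idea beyond the classical renewal FCLT is needed.
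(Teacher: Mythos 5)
The paper's proof is two lines: it invokes part (B1) of Theorem~3.2 in \cite{Alsmeyer+Iksanov+Marynych:2017}, which is precisely the functional CLT for the perturbed-walk counting process $N$ with centering $\mathtt{m}^{-1}\int_0^{ut}G(y)\,{\rm d}y$, and then swaps that centering for $V(ut)$ using the uniform bound \eqref{lord20}. Your proposal instead tries to re-derive that FCLT from scratch, starting from Donsker for the unperturbed walk $\widetilde N$ and arguing that the perturbation $\eta$ is negligible. That is, in spirit, how the cited theorem is proved, but your argument has a real gap at the crucial step.

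Specifically, the claim that $u\mapsto N(ut)-\widetilde N(ut)$ is monotone is false: writing $\Delta(t):=\widetilde N(t)-N(t)=\#\{i\ge 1:\, S_{i-1}\le t<T_i\}$, this nonnegative process jumps up by one at each $S_{i-1}$ and down by one at each $T_i$, so it is not monotone in $t$. Consequently the passage from the one-point bound $\me\,\Delta(t)=O(1)$ to the uniform statement
\begin{equation*}
\sup_{u\in[0,\Lambda]}\frac{\Delta(ut)}{t^{1/2}}~\overset{\rm P}{\to}~0
\end{equation*}
does not follow by a finite-mesh argument, and this is exactly what is needed for convergence in the $J_1$-topology to the continuous limit $B$. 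Controlling $\sup_{s\le\Lambda t}\Delta(s)$ under only $\me\eta<\infty$ is genuinely delicate (a fixed truncation of $\eta$ leaves an error of order $t\,\mmp\{\eta>M\}$, which is not $o(t^{1/2})$ without a $t$-dependent truncation level and a separate treatment of the tail), and it is the substantive content of the Alsmeyer--Iksanov--Marynych result. The correct and economical route is the paper's: cite that theorem and then use \eqref{lord20} to adjust the centering, since the difference of centerings is bounded uniformly in $t$ and hence uniformly negligible on the $t^{1/2}$ scale.
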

\begin{proof}
According to part (B1) of Theorem 3.2 in \cite{Alsmeyer+Iksanov+Marynych:2017}
\begin{equation}\label{aux99}
\Big(\frac{N(ut)-{\tt m}^{-1}\int_0^{ut}G(y){\rm d}y}{({\tt s}^2 {\tt m}^{-3}t)^{1/2}}\Big)_{u\geq 0}~\Rightarrow~ (B(u))_{u\geq 0},\quad t\to\infty.
\end{equation}
In view of \eqref{lord20}, relation \eqref{eq:flt} is an immediate consequence.
\end{proof}
\begin{lemma}\label{aux123}
Under the assumptions and notation of Theorem \ref{main5},
\begin{equation*}\label{230}
\lim_{t\to\infty}t^{-1/2}\me|N(t)-V(t)|={\tt s}^2{\tt m}^{-3}\me |B(1)|.
\end{equation*}
\end{lemma}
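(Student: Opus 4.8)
The plan is to obtain Lemma \ref{aux123} from the functional limit theorem of Lemma \ref{lem:flt} by upgrading the convergence in distribution stated there to convergence of first absolute moments. Reading off the one-dimensional marginal at $u=1$ in \eqref{eq:flt}, one has $t^{-1/2}(N(t)-V(t))\dod ({\tt s}^2{\tt m}^{-3})^{1/2}B(1)$ as $t\to\infty$, where $B(1)$ is standard normal; applying the continuous mapping theorem to $x\mapsto|x|$ then gives $t^{-1/2}|N(t)-V(t)|\dod ({\tt s}^2{\tt m}^{-3})^{1/2}|B(1)|$. Since a nonnegative family that converges in distribution has convergent expectations as soon as it is uniformly integrable, the whole argument reduces to checking uniform integrability of $\{t^{-1/2}|N(t)-V(t)|:t\geq 1\}$.

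For the latter I would establish the stronger $L^2$-boundedness $\sup_{t\geq 1}t^{-1}\me(N(t)-V(t))^2<\infty$. Because $V(t)=\me N(t)$, the quantity $\me(N(t)-V(t))^2$ is precisely $D_1(t)={\rm Var}\,N(t)$, so it suffices to show that $D_1(t)=O(t)$ as $t\to\infty$. This is essentially already available from Step 1 of the proof of Theorem \ref{main4}: one has $D_1(t)=I_1(t)$, and \eqref{defhj} with $j=1$ gives $D_1(t)\leq h_1(t)$; a glance at the definitions of $f_j$ and $g_j$ (the sums over $0\leq i\leq j-2$ are empty when $j=1$, and $V_0\equiv 1$) shows $h_1(t)=(2\tilde c+3+c)V(t)+3c{\tt m}^{-1}t$, which is $O(t)$ by \eqref{lord2}. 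Hence $\sup_{t\geq 1}t^{-1}D_1(t)<\infty$, the family $\{t^{-1/2}|N(t)-V(t)|:t\geq 1\}$ is bounded in $L^2$ and therefore uniformly integrable. Alternatively, one could quote \eqref{dj} with $j=2$ on $[a_2,\infty)$ and use the monotonicity of $t\mapsto N(t)$ to bound $D_1$ on the compact interval $[0,a_2]$.

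Putting the two steps together, uniform integrability converts the distributional convergence into the convergence of expectations, yielding $\lim_{t\to\infty}t^{-1/2}\me|N(t)-V(t)|=({\tt s}^2{\tt m}^{-3})^{1/2}\me|B(1)|$, which is the assertion. I do not anticipate any serious difficulty here: the only mildly technical ingredient is the moment bound $D_1(t)=O(t)$ underpinning uniform integrability, and that is a byproduct of the estimates already obtained in the proof of Theorem \ref{main4}, so the ``hard part'' of this lemma is essentially bookkeeping.
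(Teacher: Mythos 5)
Your proof is correct and follows essentially the same route as the paper: specialize the functional limit theorem at $u=1$ and upgrade distributional convergence to convergence of first absolute moments via uniform integrability, which in turn is obtained from the $L^2$ bound $\me(N(t)-V(t))^2=O(t)$. The only difference is in how that moment bound is justified: the paper simply cites Lemma 4.2(b) of \cite{Gnedin+Iksanov:2020}, whereas you rederive it self-containedly from the internal estimate \eqref{defhj} with $j=1$ (correctly noting that the sums in the definitions of $f_1,g_1$ collapse and $h_1(t)=(2\tilde c+3+c)V(t)+3c{\tt m}^{-1}t=O(t)$ by \eqref{lord2}); both are valid.
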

\begin{proof}
A specialization of \eqref{aux99} with $u=1$ reads
\begin{equation}\label{aux100}
\frac{N(t)-V(t)}{({\tt s}^2 {\tt m}^{-3}t)^{1/2}}~{\overset{{\rm
d}}\longrightarrow}~ B(1),\quad t\to\infty.
\end{equation}
By Lemma 4.2(b) in \cite{Gnedin+Iksanov:2020}, $\me (N(t)-V(t))^2=O(t)$ as $t\to\infty$ which ensures that the family $(t^{-1/2}(N(t)-V(t)))_{t\geq 1}$ is uniformly integrable. This together with \eqref{aux100} completes the proof.
\end{proof}

The last result that we need is a slight reformulation of Lemma A.5 in \cite{Iksanov:2013}.
\begin{lemma}\label{iks2013}
Let $0\leq a<b<\infty$ and, for each $n\in\mn$, $y_n: [0,\infty)\to [0,\infty)$ be a right-continuous bounded and nondecreasing function. Assume that $\lim_{n\to\infty}x_n=x$ in the $J_1$-topology on $D$ and that, for each $t\geq 0$, $\lim_{n\to\infty}y_n(t)=y(t)$, where $y:[0,\infty)\to [0,\infty)$ is a bounded continuous function. Then
$$\lim_{n\to\infty}\int_{[a,\,b]}x_n(t){\rm d}y_n(t)=\int_{[a,\,b]}x(t){\rm d}y(t).$$
\end{lemma}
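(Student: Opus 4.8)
The statement is purely deterministic, so no Skorokhod coupling is needed; the plan is to argue straight from the definition of $J_1$-convergence, the crucial enabling fact being that the limit $y$ is continuous, so that the Lebesgue--Stieltjes measure ${\rm d}y$ carries no atoms on $(0,\infty)$. First I would fix a continuity point $M\in(b,b+1)$ of $x$ and record the two consequences of the hypotheses that drive everything. (i) By definition of convergence in the $J_1$-topology on $D[0,M]$ there are increasing homeomorphisms $\lambda_n$ of $[0,M]$ onto itself with $\rho_n:=\sup_{t\in[0,M]}|\lambda_n(t)-t|\to 0$ and $\sup_{t\in[0,M]}|x_n(\lambda_n(t))-x(t)|\to 0$; in particular $C:=\sup_n\sup_{[0,M]}|x_n|+\sup_{[0,M]}|x|<\infty$, and, since $\lambda_n(0)=0$, also $x_n(0)\to x(0)$. (ii) Since each $y_n$ is nondecreasing, $y_n\to y$ pointwise, and $y$ is continuous, the classical fact on pointwise convergence of monotone functions to a continuous limit upgrades this to $\sup_{[0,M]}|y_n-y|\to 0$, whence the finite measures $\mu_n:={\rm d}y_n$ converge weakly to $\mu:={\rm d}y$ on compact subintervals, and $\sup_n y_n(M)<\infty$.

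\emph{Localization.} Given $\varepsilon>0$, I would replace $[a,b]$ by $[\alpha,\beta]$, where $a\le\alpha<\beta\le b$, both $\alpha$ and $\beta$ are continuity points of $x$ (these are dense), and $\alpha-a$, $b-\beta$ are so small that $\mu((a-\varepsilon',\alpha])+\mu((\beta,b+\varepsilon'])<\varepsilon$ for some $\varepsilon'>0$ (if $a=0$ the left neighbourhood is just $[0,\alpha]$ and the atom $\mu_n(\{0\})=y_n(0)\to y(0)$ is split off first using $x_n(0)\to x(0)$). Since $\bigl|\int_E x_n\,{\rm d}\mu_n\bigr|\le C\,\mu_n(E)$, and $\limsup_n\mu_n([a,\alpha])\le\mu((a-\varepsilon',\alpha])$ together with the corresponding bound near $b$ (both following from $y_n\to y$ pointwise and continuity of $y$), these boundary layers contribute at most $C\varepsilon$ to $\int_{[a,b]}x_n\,{\rm d}\mu_n$, and likewise to $\int_{[a,b]}x\,{\rm d}\mu$. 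Hence it suffices to show $\int_{[\alpha,\beta]}x_n\,{\rm d}\mu_n\to\int_{[\alpha,\beta]}x\,{\rm d}\mu$.

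\emph{Convergence on $[\alpha,\beta]$.} Split the difference as $\int_{[\alpha,\beta]}(x_n-x)\,{\rm d}\mu_n+\bigl(\int_{[\alpha,\beta]}x\,{\rm d}\mu_n-\int_{[\alpha,\beta]}x\,{\rm d}\mu\bigr)$. For the bracketed term: $\mu_n\to\mu$ weakly on $[\alpha,\beta]$ (as $\alpha,\beta$ are continuity points of $\mu$), $x$ is bounded there, and its discontinuity set is countable, hence $\mu$-null; so the standard weak-convergence ``mapping'' theorem for bounded, a.e.-continuous integrands forces this term to $0$. For $\int_{[\alpha,\beta]}(x_n-x)\,{\rm d}\mu_n$, fix $\eta>0$: the jumps $t_1,\dots,t_m$ of $x$ in $[\alpha,\beta]$ of size $\ge\eta$ are finitely many and interior, and off $\bigcup_i(t_i-\rho_n,t_i+\rho_n)$ one has $|x_n-x|\le\omega(\eta)$ with $\omega(\eta)\to 0$ as $\eta\to 0$ (this uses $\sup_{[0,M]}|x_n\circ\lambda_n-x|\to 0$ and the modulus of continuity of $x$ away from its large jumps). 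Therefore
$$\Bigl|\int_{[\alpha,\beta]}(x_n-x)\,{\rm d}\mu_n\Bigr|\le\omega(\eta)\,y_n(M)+2C\sum_{i=1}^{m}\mu_n\bigl((t_i-\rho_n,t_i+\rho_n)\bigr),$$
and for each $i$, $\mu_n((t_i-\rho_n,t_i+\rho_n))\to 0$ since $\limsup_n\mu_n((t_i-\rho,t_i+\rho))\le\mu([t_i-\rho,t_i+\rho])\to\mu(\{t_i\})=0$ as $\rho\downarrow 0$. Letting $n\to\infty$ and then $\eta\to 0$ completes the proof.

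\emph{Main obstacle.} The delicate point, where I would concentrate the care, is the incompatibility of $J_1$-convergence with the fixed endpoints of $[a,b]$: truncation to $[a,b]$ is not $J_1$-continuous when $x$ jumps at $a$ or $b$, and $J_1$-convergence gives no pointwise control of $x_n$ near the jumps of $x$. Both issues are dissolved only through the continuity of $y$ — it renders ${\rm d}y$ atomless, so the jump set of $x$ is $\mu$-null and shrinking neighbourhoods of $a$, of $b$, and of the large jumps of $x$ carry asymptotically vanishing $\mu_n$-mass; without continuity of $y$ the assertion is false in general.
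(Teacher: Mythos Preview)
The paper does not actually prove this lemma: it simply records that the statement is ``a slight reformulation of Lemma~A.5 in \cite{Iksanov:2013}'' and moves on. Your proposal therefore supplies a self-contained argument where the paper offers only a citation, and it is correct. You identify exactly the right ingredients: pointwise convergence of the monotone $y_n$ to a continuous limit upgrades to local uniform convergence, hence to weak convergence of the Stieltjes measures $\mu_n\to\mu$ with $\mu$ atomless on $(0,\infty)$; this atomlessness is precisely what neutralises the two obstructions coming from $J_1$-convergence (no pointwise control of $x_n$ at jump points of $x$, and no continuity of restriction to $[a,b]$). Your localisation to continuity endpoints $\alpha,\beta$ and the split $\int(x_n-x)\,{\rm d}\mu_n+\bigl(\int x\,{\rm d}\mu_n-\int x\,{\rm d}\mu\bigr)$ is the natural route; the second term is handled by the portmanteau theorem for bounded $\mu$-a.e.\ continuous integrands, and the first by the ``uniform convergence off neighbourhoods of the large jumps'' characterisation of $J_1$. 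One cosmetic point: in the bound $|x_n-x|\le\omega(\eta)$ off $\bigcup_i(t_i-\rho_n,t_i+\rho_n)$, the quantity $\omega(\eta)$ tacitly depends on $n$ as well (through $\epsilon_n$ and the requirement that $\rho_n$ be below the c\`adl\`ag modulus threshold for the given $\eta$); the clean formulation is $\limsup_{n\to\infty}\sup\{|x_n(s)-x(s)|:s\in[\alpha,\beta]\setminus\bigcup_i(t_i-\rho_n,t_i+\rho_n)\}\le C'\eta$, which is what your parenthetical justification yields and which suffices.
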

\begin{proof}[Proof of Theorem \ref{main5}]
For notational simplicity we shall write $j$ for $j(t)$. According to the Cram\'{e}r-Wold device it suffices to show that for any $\ell\in\mn$, any real $\alpha_1,\ldots, \alpha_\ell$ and any
$0<u_1<\ldots<u_\ell<\infty$, as $t\to\infty$,
\begin{equation}\label{fidi}
\sum_{i=1}^\ell \alpha_i\frac{j^{1/2}(\lfloor j u_i\rfloor-1)! Z(j u_i,
t)}{({\tt s}^2{\tt m}^{-2\lfloor j u_i\rfloor-1}t^{2\lfloor ju_i\rfloor-1})^{1/2}}~{\overset{{\rm
d}}\longrightarrow}~\sum_{i=1}^\ell \alpha_i u_i\int_0^\infty B(y)e^{-u_i y}{\rm d}y,
\end{equation}
where $$Z(ju, t):=\sum_{r\geq 1}V_{\lfloor ju\rfloor-1}(t-T_r)\1_{\{T_r\leq t\}}-V_{\lfloor ju\rfloor}(t).$$

We have for any $u, T>0$ and sufficiently large $t$
\begin{eqnarray*}
\frac{j^{1/2}(\lfloor j u\rfloor-1)! Z(ju,t)}{({\tt s}^2{\tt m}^{-2\lfloor j u\rfloor-1}t^{2\lfloor ju\rfloor-1})^{1/2}}&=&\frac{j^{1/2}(\lfloor j u\rfloor-1)!}{({\tt s}^2{\tt m}^{-3} t^{2\lfloor ju\rfloor-1})^{1/2}}
\int_{[0,\,t]}V_{\lfloor ju\rfloor-1}(t-y){\rm d}(N(y)-V(y))\\&=&\frac{(\lfloor j u\rfloor-1)!{\tt m}^{\lfloor j u\rfloor-1}}{t^{\lfloor ju\rfloor-1}} \int_{[0,\,T]}\frac{N(yt/j)-V(yt/j)}{({\tt s}^2{\tt m}^{-3}t/j)^{1/2}}{\rm d}_y(- V_{\lfloor ju\rfloor-1}(t(1-y/j)))\\&+&\frac{j^{1/2}(\lfloor j u\rfloor-1)!}{({\tt s}^2{\tt m}^{-2\lfloor j u\rfloor-1}t^{2\lfloor ju\rfloor-1})^{1/2}}\int_{(Tt/j,\,t]}(N(y)-V(y)){\rm d}_y(-V_{\lfloor ju\rfloor-1}(t-y)).
\end{eqnarray*}
By Lemma \ref{lem:flt}, $$\Big(\frac{N(ut/j)-V(ut/j)}{({\tt s}^2 {\tt m}^{-3}t/j)^{1/2}}\Big)_{u\geq 0}~\Rightarrow~ (B(u))_{u\geq 0}$$
in the $J_1$-topology on $D$. Here, we have used the assumption $t/j(t)\to\infty$. By Skorokhod's
representation theorem there exist versions $\widehat N_t$ and $\widehat B$ of $((N(ut/j)-V(ut/j))/({\tt s}^2{\tt m}^{-3}t/j)^{1/2})_{u\geq 0}$ and $B$, respectively such that
\begin{equation}\label{cs}
\lim_{t\to\infty}\sup_{y\in [0,\,T]}\Big|\widehat N_t(y)-\widehat B(y)\Big|=0\quad\text{a.s.}
\end{equation}
for all $T>0$. In view of \eqref{impo}, $$\frac{(\lfloor j u\rfloor-1)!{\tt m}^{\lfloor j u\rfloor-1}}{t^{\lfloor ju\rfloor-1}}V_{\lfloor ju\rfloor-1}(t(1-y/j))~\sim~ (1-y/j)^{\lfloor ju\rfloor-1}~\to~e^{-uy},\quad t\to\infty$$ for each fixed $y\geq 0$. By Lemma \ref{iks2013}, this in combination with \eqref{cs}
yields
$$\lim_{t\to\infty}\sum_{i=1}^\ell \alpha_i\frac{(\lfloor j u\rfloor-1)!{\tt m}^{\lfloor j u\rfloor-1}}{t^{\lfloor ju\rfloor-1}}  \int_0^T \widehat N_t(y){\rm d}_y(-V_{\lfloor ju\rfloor-1}(t(1-y/j)))=\sum_{i=1}^\ell \alpha_i
u_i\int_0^T \widehat{B}(y)e^{-u_iy}{\rm d}y\quad \text{a.s.}$$ and
thereupon
$$
\sum_{i=1}^\ell \alpha_i \frac{(\lfloor j u\rfloor-1)!{\tt m}^{\lfloor j u\rfloor-1}}{t^{\lfloor ju\rfloor-1}}\int_{[0,\,T]}\frac{N(yt/j)-V(yt/j)}{({\tt s}^2{\tt m}^{-3}t/j)^{1/2}}{\rm d}_y(-V_{\lfloor ju\rfloor-1}(t(1-y/j)))~{\overset{{\rm
d}}\longrightarrow} ~\sum_{i=1}^\ell \alpha_i u_i\int_0^T B(y)e^{-u_iy}{\rm
d}y,
$$
as $t\to\infty$. Since $\lim_{T\to\infty}\sum_{i=1}^\ell \alpha_i u_i \int_0^T
B(y)e^{-u_iy}{\rm d}y=\sum_{i=1}^\ell \alpha_i u_i \int_0^\infty
B(y)e^{-u_iy}{\rm d}y$ a.s.\ we are left with proving that
$$\lim_{T\to\infty}{\lim\sup}_{t\to\infty}\,\mmp\bigg\{\bigg|\sum_{i=1}^\ell\alpha_i \frac{j^{1/2}(\lfloor j u_i\rfloor-1)!{\tt m}^{\lfloor j u\rfloor}}{t^{\lfloor ju_i\rfloor-1/2}}\int_{(Tt/j,\,t]}(N(y)-V(y)){\rm d}(-V_{\lfloor ju\rfloor-1}(t-y))\bigg|>\varepsilon\bigg\}=0$$ for all $\varepsilon>0$. This limit relation is ensured by Markov's inequality and Lemma \ref{auxmain5} because, by Lemma \ref{aux123}, $\me |N(y)-V(y)| \sim {\tt s}{\tt m}^{-3/2} \me |B(1)|y^{1/2}$ as $y\to\infty$. The proof of Theorem \ref{main5} is complete.
\end{proof}

\section{Proof of Theorem \ref{main100}}\label{sect:main}
Throughout the proof of Theorem \ref{main100} we assume that $T$ is a perturbed random walk generated by $(\xi,\eta)=(|\log W|,|\log(1-W)|)$, so that, for $j\in\mn$ and $n\in\mn$,
$$
\me \rho_j(n)=\me\left(\sum_{|u|=j}\1_{\{|\log P(u)|\leq \log n\}}\right)=V_j(\log n).
$$
We shall use a decomposition
\begin{align*}
K_n(\lfloor j_n u\rfloor)-\me \rho_{\lfloor j_n u\rfloor}(n)&=K_n(\lfloor j_n u\rfloor)-V_{\lfloor j_n u\rfloor}(\log n)= (K_n(\lfloor j_n u\rfloor)-\rho_{\lfloor j_n u\rfloor}(n))\\
&+\Big(\rho_{\lfloor j_n u\rfloor}(n)-\sum_{r\geq 1}V_{\lfloor j_n u\rfloor-1}(\log n-T_r)\1_{\{T_r\leq \log n\}}\Big)\\
&+\sum_{r\geq 1}V_{\lfloor j_n u\rfloor-1}(\log n-T_r)\1_{\{T_r\leq \log n\}}-V_{\lfloor j_n u\rfloor}(\log n)\\
&=:Y_1(n,u)+Y_2(n,u)+Y_3(n,u).
\end{align*}
It suffices to check that, as $n\to\infty$,
\begin{equation}\label{1a}
\Bigg(\frac{\lfloor j_n\rfloor^{1/2}(\lfloor j_n u\rfloor-1)!\,Y_i(n,u)}{\mu^{-\lfloor j_n u\rfloor}(\log n)^{\lfloor j_n u\rfloor-1/2}}\Bigg)_{u>0}~{\overset{{\rm f.d.d.}}\longrightarrow}~(\Theta(u))_{u>0};
\end{equation}
for $i=1,2$ and
\begin{equation}\label{3a}
\Bigg(\frac{\lfloor j_n\rfloor^{1/2}(\lfloor j_n u\rfloor-1)!\,Y_3(n,u)}{(\sigma^2 \mu^{-2\lfloor j_n u\rfloor-1}(\log n)^{2\lfloor j_n
u\rfloor-1})^{1/2}}\Bigg)_{u>0}~{\overset{{\rm f.d.d.}}\longrightarrow}~\Big(\int_{[0,\,\infty)}e^{-uy}{\rm d}B(y)\Big)_{u>0}.
\end{equation}

Formulae \eqref{1a} with $i=2$ and \eqref{3a} follow from Theorems \ref{main4} and \ref{main5}, respectively, in which we replace $t$ with $\log n$ and choose any positive function $t\mapsto j(t)$ satisfying $j(\log n)=j_n$ and $j(t)=o(t^{1/2})$ as $t\to\infty$. Note that ${\tt s}^2=\sigma^2$ and ${\tt m}=\mu$.

In view of Markov's inequality and the Cram\'{e}r-Wold device, relation \eqref{1a} with $i=1$ follows if we can prove that, for fixed $u>0$,
\begin{equation}\label{aux21}
\lim_{n\to\infty} \frac{\lfloor j_n\rfloor^{1/2}(\lfloor j_n u\rfloor-1)!\,\me\big|K_n(\lfloor j_n u\rfloor)-\rho_{\lfloor j_n u\rfloor}(n)\big|}{\mu^{-\lfloor j_n u\rfloor}(\log n)^{\lfloor j_n u\rfloor-1/2}}=0.
\end{equation}
In Section 6 of \cite{Buraczewski+Dovgay+Iksanov:2020}, see the top of page 21, it was shown that, for fixed $u>0$,
\begin{equation*}
\me\big|K_n(\lfloor j_n u\rfloor)-\rho_{\lfloor j_n u\rfloor}(n)\big|\leq n\int_{(n,\,\infty)}x^{-1}{\rm d}\me \rho_{\lfloor j_n u\rfloor}(x)+\int_{[1,\,n]}e^{-n/x}{\rm d}\me \rho_{\lfloor j_n u\rfloor}(x).
\end{equation*}
By Lemma \ref{key2}  applied in the particular setting $(\xi,\eta)=(|\log W|,|\log(1-W)|)$ in which
$V_{\lfloor j_n u\rfloor}(\log x)=\me \rho_{\lfloor j_n u\rfloor}(x)$ for $x\geq 1$ and ${\tt m}=\mu$, we obtain, as $n\to\infty$,
$$
n\int_{(n,\,\infty)}x^{-1}{\rm d}\me \rho_{\lfloor j_n u\rfloor}(x)=\int_{(\log n,\,\infty)}e^{\log n-x}{\rm d}V_{\lfloor j_n u\rfloor}(x)=O(V_{\lfloor j_n u\rfloor-1}(\log n))=O\Big(\frac{(\log n)^{\lfloor j_n u\rfloor-1}}{(\lfloor j_n u\rfloor -1)!\mu^{\lfloor j_n u\rfloor-1}}\Big),
$$
for every fixed $u>0$. In the same vein, applying Lemma \ref{key1} with $f(x)=e^{-e^{x}}$ we have
$$\int_{[1,\,n]}e^{-n/x}{\rm d}\me \rho_{\lfloor j_n u\rfloor}(x)=\int_{[0,\,\log n]}e^{-e^{\log n - x}}{\rm d}V_{\lfloor j_n u\rfloor}(x)=O(V_{\lfloor j_n u\rfloor-1}(\log n))=O\Big(\frac{(\log n)^{\lfloor j_n u\rfloor-1}}{(\lfloor j_n u\rfloor -1)!\mu^{\lfloor j_n u\rfloor-1}}\Big),
$$
for every fixed $u>0$, as $n\to\infty$. In both lemmas we have chosen any positive function $t\mapsto j(t)$ satisfying $j(\log n)=j_n$ and $j(t)=o(t^{1/2})$ as $t\to\infty$. Therefore, formula \eqref{aux21} is justified and this finishes the proof of Theorem \ref{main100}.

\section{Proof of Proposition \ref{precise}}\label{sect:preci}

Proposition \ref{precise} will be deduced from the following result.
\begin{assertion}\label{precise2}
Assume that the distribution of $\xi$ has an absolutely continuous component, $\me e^{\beta_1\xi}<\infty$, $\me e^{\beta_2\eta}<\infty$ for some $\beta_1,\beta_2>0$ and
$$\gamma:=\frac{\me\xi^2}{2{\tt m}^2}-\frac{\me \eta}{{\tt m}}>0.$$ Then
\begin{equation}\label{prec}
V_j(t)-\frac{t^j}{j!{\tt m}^j}~\sim~\frac{\gamma jt^{j-1}}{(j-1)!{\tt m}^{j-1}},\quad t\to\infty
\end{equation}
whenever $j=j(t)=o(t^{1/2})$ as $t\to\infty$ ($j$ is allowed to be fixed).
\end{assertion}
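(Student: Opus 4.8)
The plan is to upgrade the first-order renewal estimates of Section~\ref{auxstat} to an exact second-order expansion of the renewal measure, and then to extract the asymptotics of $V_j=V^{\ast j}$ from a binomial expansion of the $j$-fold convolution. First I would record the refined renewal theorem. Since the law of $\xi$ has an absolutely continuous component it is spread out, and $\me e^{\beta_1\xi}<\infty$ forces $\me\xi^2<\infty$; under these hypotheses a standard refinement of the key renewal theorem gives
\[
U(\mathrm d t)=\frac{1}{{\tt m}}\,\mathrm d t+\phi_U(\mathrm d t),\qquad t\ge 0,
\]
where $\phi_U$ is a \emph{finite} signed measure on $[0,\infty)$ with total mass $\phi_U([0,\infty))=\me\xi^2/(2{\tt m}^2)$ (equivalently $U(t)-t/{\tt m}\to\me\xi^2/(2{\tt m}^2)$ with a remainder of bounded variation; the exponential moment even makes that remainder exponentially small). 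Substituting into $V=U\ast G$ and using $(\lambda\ast G)(\mathrm d t)=G(t)\,\mathrm d t$ together with $1-G(t)=O(e^{-\beta_2 t})$ (from $\me e^{\beta_2\eta}<\infty$), one gets
\[
V(\mathrm d t)=\frac{1}{{\tt m}}\,\mathrm d t+\phi(\mathrm d t),\qquad \phi:=\phi_U\ast G+\frac{1}{{\tt m}}\big(\lambda\ast G-\lambda\big),
\]
where $\phi$ is again a finite signed measure, with $\phi([0,\infty))=\tfrac{\me\xi^2}{2{\tt m}^2}-\tfrac{\me\eta}{{\tt m}}=\gamma$. Write $K:=\|\phi\|<\infty$ for its total variation, and note $\gamma\ne 0$ by hypothesis, which is what makes \eqref{prec} meaningful.

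Next I would expand the convolution. By bilinearity and commutativity of convolution (or from $\widehat V_j=\widehat V^{\,j}$ on $\{\Re s>0\}$ plus uniqueness of Laplace transforms),
\[
V_j=\Big(\tfrac{1}{{\tt m}}\lambda+\phi\Big)^{\ast j}=\sum_{k=0}^{j}\binom{j}{k}{\tt m}^{-(j-k)}\,\lambda^{\ast(j-k)}\ast\phi^{\ast k},
\]
and since $\lambda^{\ast m}$ has distribution function $t^m/m!$ (with $\lambda^{\ast 0}=\delta_0$), evaluation at $[0,t]$ gives $V_j(t)=\sum_{k=0}^{j}\binom{j}{k}{\tt m}^{-(j-k)}\int_{[0,t]}\frac{(t-y)^{j-k}}{(j-k)!}\phi^{\ast k}(\mathrm d y)$, the $k=j$ summand read as $\phi^{\ast j}([0,t])$. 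The $k=0$ term is exactly $t^j/(j!{\tt m}^j)$, which I subtract. For $k=1$, since $j=o(t^{1/2})$ we have $(1-y/t)^{j-1}\to 1$ for each fixed $y\ge 0$, while $0\le(1-y/t)^{j-1}\mathbf 1_{[0,t]}(y)\le 1\in L^1(|\phi|)$, so dominated convergence yields
\[
j{\tt m}^{-(j-1)}\!\int_{[0,t]}\!\frac{(t-y)^{j-1}}{(j-1)!}\,\phi(\mathrm d y)=\frac{jt^{j-1}}{(j-1)!{\tt m}^{j-1}}\int_{[0,t]}\Big(1-\frac{y}{t}\Big)^{j-1}\phi(\mathrm d y)\sim\frac{\gamma\,jt^{j-1}}{(j-1)!{\tt m}^{j-1}},
\]
which is the announced right-hand side of \eqref{prec}.

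It then remains to show the tail $k\ge 2$ is negligible. Using $0\le(t-y)^{j-k}\le t^{j-k}$ on $[0,t]$ and $\|\phi^{\ast k}\|\le K^k$, the absolute value of the $k$-th summand is at most $\binom{j}{k}t^{j-k}K^k/((j-k)!{\tt m}^{j-k})$; dividing by $jt^{j-1}/((j-1)!{\tt m}^{j-1})$ and using $(j-1)!/(j-k)!\le j^{k-1}$, this ratio is at most $\frac{((j-1)!)^2}{k!((j-k)!)^2}({\tt m}K/t)^{k-1}K\le\frac{K}{k!}(j^2{\tt m}K/t)^{k-1}$. Summing over $2\le k\le j$ gives $O(j^2/t)=o(1)$ since $j=o(t^{1/2})$. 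Hence $V_j(t)-t^j/(j!{\tt m}^j)=(\gamma+o(1))\,jt^{j-1}/((j-1)!{\tt m}^{j-1})$, which is \eqref{prec} (the fixed-$j$ case being the identical computation).

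The hard part is Step~1: producing the second-order renewal expansion in the sharp measure-theoretic form — finite total variation of $\phi_U$ and its \emph{exact} mass $\me\xi^2/(2{\tt m}^2)$ — which is stronger than anything used elsewhere in the paper and is precisely where the two extra hypotheses of the proposition are needed: the absolutely continuous component makes $\mathcal L(\xi)$ spread out (so the key renewal theorem holds in measure-valued form), and the finite exponential moment guarantees integrability (indeed exponential decay) of the remainder term. Everything after Step~1 is bookkeeping whose only quantitative input is $j=o(t^{1/2})$, which collapses all $k\ge 2$ contributions.
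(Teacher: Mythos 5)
Your proof is correct and takes a genuinely different route from the paper's. The paper works entirely at the level of renewal \emph{functions}: it invokes Lemma~\ref{eq:Borov} to write $V(t)={\tt m}^{-1}t+\gamma+T(t)$ with $|T(t)|\le C_0e^{-\beta_0 t}$, then defines the polynomial $W_j(t)=\sum_{i=0}^{j}\binom{j}{i}\gamma^{j-i}t^i/(i!{\tt m}^i)$ and proves by induction, using the recursion $V_j=V\ast V_{j-1}$ and the binomial identity $\binom{k-3}{i-1}+\binom{k-3}{i}=\binom{k-2}{i}$, that $|V_j(t)-W_j(t)|\le \tilde A(j-1)\sum_{i=0}^{j-2}\binom{j-2}{i}c^{j-2-i}t^i/(i!{\tt m}^i)$; formula \eqref{inter4} then kills the error relative to the target term. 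You instead promote the expansion to the level of \emph{measures}, writing $V(\mathrm{d}t)={\tt m}^{-1}\mathrm{d}t+\phi(\mathrm{d}t)$ with $\phi$ a finite signed measure of total mass $\gamma$, and then a single binomial expansion of the $j$-fold convolution isolates the $k=0$ and $k=1$ terms exactly and collapses $k\ge 2$ by a crude total-variation bound. Your bookkeeping in Steps~2--3 is clean and correct, and it avoids the induction entirely. The trade-off is in Step~1: the paper's Lemma~\ref{eq:Borov} only establishes the function-level statement $|U(t)-t/{\tt m}-\me\xi^2/(2{\tt m}^2)|\le Ce^{-\beta t}$ (citing Borovkov), whereas you need the strictly stronger measure-level statement that $U-{\tt m}^{-1}\lambda$ has \emph{finite total variation}; this is Stone's decomposition for spread-out distributions sharpened by the exponential moment (so the absolutely continuous part of $U$ has density $u_1$ with $\int|u_1-1/{\tt m}|<\infty$), which is indeed available under the proposition's hypotheses but is a different and heavier input than the one the paper actually establishes. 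You should cite it explicitly (e.g.\ Stone's decomposition plus exponential decay of the renewal density under Cram\'er's condition) rather than labeling it ``a standard refinement of the key renewal theorem,'' since the mass identity $\phi_U([0,\infty))=\me\xi^2/(2{\tt m}^2)$ is the easy part (it follows from the function-level limit) and the finiteness of $\|\phi_U\|$ is the genuinely nontrivial part that your whole argument hinges on. With that reference supplied, the proof is complete and in some respects more transparent than the paper's.
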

\begin{proof}
We intend to prove that
$$\Big|V_j(t)-\sum_{i=0}^j\binom{j}{i}\frac{\gamma^{j-i}t^i}{i!{\tt m}^i}\Big|\leq \tilde{A}(j-1)\sum_{i=0}^{j-2}\binom{j-2}{i}\frac{c^{j-2-i}t^i}{i!{\tt m}^i},\quad j\geq 2,\quad t\geq 0$$ with the constant $c$ defined in \eqref{lord2} and a constant $\tilde{A}>0$ to be defined below. This together with \eqref{inter4} entails $$\Big|V_j(t)-\frac{t^j}{j!{\tt m}^j}-\frac{\gamma jt^{j-1}}{(j-1)!{\tt m}^{j-1}}\Big|=o\Big(\frac{jt^{j-1}}{(j-1)!{\tt m}^{j-1}}\Big),\quad t\to\infty,$$ whence \eqref{prec}. 

It is convenient to define $V_0(t):=1$ for $t\geq 0$. By Lemma \ref{eq:Borov}, $$V(t)={\tt m}^{-1}t+\gamma+T(t),\quad t\geq 0,$$
where $T$ satisfies \eqref{eq:relat2}, that is, 
\begin{equation*}
|T(t)|\leq C_0e^{-\beta_0 t},\quad t\geq 0
\end{equation*}
for some $C_0>0$ and $\beta_0\in (0,\min(\beta_1,\beta_2))$.

Using this we obtain, for $j\in\mn$ and $t\geq 0$,
$$V_j(t)=\int_{[0,\,t]}V(t-y){\rm d}V_{j-1}(y)={\tt m}^{-1}\int_0^t V_{j-1}(y){\rm d}y+\gamma V_{j-1}(t)+\int_{[0,\,t]}T(t-y){\rm d}V_{j-1}(y).$$ For $j\in\mn$ and $t\geq 0$, put
$$W_j(t):=\sum_{i=0}^j \binom{j}{i} \frac{t^i \gamma^{j-i}}{i!{\tt m}^i}$$ and $T_j(t):=V_j(t)-W_j(t)$. Note that, for $t\geq 0$, $T_0(t)=0$ and $T_1(t)=T(t)$. We shall prove below that
\begin{equation}\label{inter5}
W_j(t)={\tt m}^{-1}\int_0^t W_{j-1}(y){\rm d}y+\gamma W_{j-1}(t),\quad j\in\mn,\quad t\geq 0,
\end{equation}
and thereupon
\begin{equation}\label{recursi}
T_j(t)={\tt m}^{-1}\int_0^t T_{j-1}(y){\rm d}y+\gamma T_{j-1}(t)+\int_{[0,\,t]}T(t-y){\rm d}V_{j-1}(y),\quad j\in\mn,\quad t\geq 0.
\end{equation}
In view of \eqref{eq:relat2} we have
$$
\Big|\int_{[0,\,t]}T(t-y){\rm d}V(y)\Big|\leq C_0\int_{[0,\,t]}e^{-\beta_0(t-y)}{\rm d}V(y)\leq \tilde{A}_1,\quad t\geq 0,
$$
for some $\tilde{A}_1>0$. Here, the last inequality follows from Lemma \ref{key} because the function $t\mapsto C_0e^{-\beta_0 t}$ is dRi on $[0,\infty)$. Thus, by \eqref{recursi}
\begin{equation}\label{inter2}
|T_2(t)|\leq C_0({\tt m}\beta_0)^{-1}+\gamma C_0+\tilde{A}_1=:\tilde{A}_2,\quad t\geq 0,
\end{equation}
and also, see the proof of Lemma \ref{key2} for an accurate argument,
\begin{equation}\label{inter3}
\Big|\int_{[0,\,t]}T(t-y){\rm d}V_i(y)\Big|\leq \tilde{A}_3V_{i-1}(t),\quad i\in\mn,\quad t\geq 0,
\end{equation}
for some $\tilde{A}_3>0$. We shall show with the help of the mathematical induction that recursive equation \eqref{recursi} entails
\begin{equation}\label{inter1}
|T_j(t)|\leq \tilde{A}(j-1)\sum_{i=0}^{j-2}\binom{j-2}{i}\frac{c^{j-2-i}t^i}{i!{\tt m}^i},\quad j\geq 2, \quad t\geq 0,
\end{equation}
where $\tilde{A}:=\max(\tilde{A}_2,\tilde{A}_3)>0$. Observe that this gives the inequality stated at the very beginning of the proof. 
\begin{align*}
|T_k(t)|&\leq {\tt m}^{-1}\int_0^t |T_{k-1}(y)|{\rm d}y+\gamma |T_{k-1}(t)|+\Big|\int_{[0,\,t]}T(t-y){\rm d}V_{k-1}(y)\Big|\\
&\leq \tilde{A}\Big((k-2)\sum_{i=0}^{k-3}\binom{k-3}{i}\frac{c^{k-3-i}t^{i+1}}{(i+1)!{\tt m}^{i+1}}+(k-2)\sum_{i=0}^{k-3}\binom{k-3}{i}\frac{c^{k-2-i}t^i}{i!{\tt m}^i}+V_{k-2}(t)\Big)\\
&\leq \tilde{A}\Big((k-2)\frac{t^{k-2}}{(k-2)!{\tt m}^{k-2}}+(k-2)\sum_{i=1}^{k-3}\binom{k-3}{i-1}\frac{c^{k-2-i}t^i}{i!{\tt m}^i}+(k-2)c^{k-2}\\
&+(k-2)\sum_{i=1}^{k-3}\binom{k-3}{i-1}\frac{c^{k-2-i}t^i}{i!{\tt m}^i}+c^{k-2}+\frac{t^{k-2}}{(k-2)!{\tt m}^{k-2}}+\sum_{i=1}^{k-3}\binom{k-2}{i}\frac{c^{k-2-i}t^i}{i!{\tt m}^i}\Big)\\
&=\tilde{A}(k-1)\Big(c^{k-2}+\frac{t^{k-2}}{(k-2)!{\tt m}^{k-2}}+\sum_{i=1}^{k-3}\binom{k-2}{i}\frac{c^{k-2-i}t^i}{i!{\tt m}^i}\Big)=\tilde{A}(k-1)\sum_{i=0}^{k-2}\binom{k-2}{i}\frac{c^{k-2-i}t^i}{i!{\tt m}^i},
\end{align*}
thereby completing the induction. We have used $\gamma\leq c$ (the limit of a function does not exceed its supremum) and \eqref{inter3} for the first inequality, \eqref{ineq2} for the second inequality, and
\begin{equation}\label{inter6}
\binom{k-3}{i-1}+\binom{k-3}{i}=\binom{k-2}{i}
\end{equation}
for the penultimate equality.

It remains to check \eqref{inter5}. Using once again \eqref{inter6} with $k=j+2$ we obtain, for $j\in\mn$ and $t\geq 0$,
\begin{multline*}
{\tt m}^{-1}\int_0^t W_{j-1}(y){\rm d}y+\gamma W_{j-1}(t)=\frac{t^j}{j!{\tt m}^j}+\sum_{i=1}^{j-1} \binom{j-1}{i-1}\frac{\gamma^{j-i}t^i}{i!{\tt m}^i}+\gamma^j+\sum_{i=1}^{j-1} \binom{j-1}{i}\frac{\gamma^{j-i}t^i}{i!{\tt m}^i}\\=\gamma^j+\sum_{i=1}^{j-1} \binom{j}{i}\frac{\gamma^{j-i}t^i}{i!{\tt m}^i}+
\frac{t^j}{j!{\tt m}^j}=W_j(t).
\end{multline*}
\end{proof}

\begin{proof}[Proof of Proposition \ref{precise}]
Assume that the distribution of $(\xi,\eta)$ satisfies the assumptions of Proposition \ref{precise2}. Then, by \eqref{prec},
$$\frac{j^{1/2}(j-1)!}{{\tt m}^{-j} t^{j-1/2}}\Big(V_j(t)-\frac{t^j}{j!{\tt m}^j}\Big)~\sim~ \frac{{\tt m}\gamma j^{3/2}}{t^{1/2}},\quad t\to\infty$$ whenever $j=j(t)=o(t^{1/2})$ as $t\to\infty$. Thus, the limit of the left-hand side is equal to ${\tt m}\gamma\alpha^{3/2}$ provided that $j=j(t)\sim \alpha t^{1/3}$ as $t\to\infty$, whereas the limit is $+\infty$ provided that $\lim_{t\to\infty}t^{-1/3}j(t)=\infty$.

Now Proposition \ref{precise} follows by setting $(\xi,\eta)=(|\log W|, |\log(1-W)|)$ which results in $V_j(\log n)=\me \rho_j(n)$ and ${\tt m}=\mu$ and then replacing $t$ with $\log n$ and choosing any positive function $t\mapsto j(t)$ that satisfies $j(\log n)=j_n$.
\end{proof}

\section{Appendix}

In this section we shall prove a counterpart for perturbed random walks of a `light' version of the key renewal theorem which applies both in the nonlattice and lattice cases.

Put $V(x):=0$ for $x<0$. We start by noting that
\begin{equation}\label{subad}
V(x+y)-V(x)\leq U(y),\quad x,y\in\mr.
\end{equation}
Indeed, for $x,y\geq 0$,
\begin{eqnarray}\label{eqV}
V(x+y)-V(x)&=&\me (U(x+y-\eta)-U(x-\eta))\1_{\{\eta\leq x\}}+\me U(x+y-\eta)\1_{\{x<\eta\leq x+y\}}\\&\leq& U(y)(\mmp\{\eta\leq x\}+\mmp\{x<\eta\leq x+y\})\leq U(y)\notag
\end{eqnarray}
having utilized subadditivity and monotonicity of $U$ for the penultimate inequality. If $x,y<0$, then both sides of \eqref{subad} are zero. Finally, we use monotonicity of $V$ to obtain: if $x<0$ and $y\geq 0$, then $V(x+y)-V(x)=V(x+y)\leq V(y)\leq U(y)$; and if $x\geq 0$ and $y<0$, then $V(x+y)-V(x)\leq 0=U(y)$.
\begin{lemma}\label{key}
Let $f: \mr\to [0,\infty)$ be a dRi function on $\mr$. Then for some $r>0$ and all $x\in\mr$
\begin{equation}\label{bound}
\int_{[0,\,\infty)} f(x-y){\rm d}V(y)\leq r.
\end{equation}

If $f$ is dRi on $[0,\infty)$ or $(-\infty, 0]$, then the range of integration $[0,\,\infty)$ should be replaced with $[0,\,x]$ or $[x,\infty)$ and then \eqref{bound} holds for all $x\geq 0$ or all $x\leq 0$, respectively.
\end{lemma}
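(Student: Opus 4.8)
The plan is to reduce the bound for the renewal measure ${\rm d}V$ of the perturbed random walk to the corresponding classical bound for the renewal measure ${\rm d}U$ of the underlying ordinary random walk, via the identity $V=U\ast G$ in which $G$ is a (sub-)probability distribution function. \emph{Step 1 (the bound for $U$).} I would first record the well-known fact that, for $f$ dRi on $\mr$,
$$
r_0:=\sup_{x\in\mr}\int_{[0,\,\infty)}f(x-y)\,{\rm d}U(y)<\infty .
$$
Indeed, since $U(s)=0$ for $s<0$ and $U$ is nondecreasing and subadditive, $U(a+1)-U(a)\leq U(1)$ for all $a\in\mr$, so ${\rm d}U$ assigns mass at most $U(1)$ to any interval of unit length. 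Tiling $[0,\infty)$ by the intervals $[k,k+1)$, $k\geq 0$, and noting that $x-y\in(x-k-1,\,x-k]$ whenever $y\in[k,k+1)$, one gets
$$
\int_{[0,\,\infty)}f(x-y)\,{\rm d}U(y)\leq U(1)\,\Big(\sup_{t\in\mr}f(t)+\sum_{k\geq 0}\sup_{t\in(x-k-1,\,x-k]}f(t)\Big),
$$
and the right-hand side is finite uniformly in $x$ precisely because direct Riemann integrability of $f$ makes such shifted upper Riemann sums of mesh $1$ bounded by a constant independent of the offset. (This is a standard ingredient of the key renewal theorem and could alternatively be cited.)

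\emph{Step 2 (transfer to $V$).} Since $V=U\ast G$ as distribution functions with $U$ and $G$ concentrated on $[0,\infty)$, the associated Lebesgue--Stieltjes measures satisfy ${\rm d}V={\rm d}U\ast{\rm d}G$, and Tonelli's theorem gives, for every $x\in\mr$,
$$
\int_{[0,\,\infty)}f(x-y)\,{\rm d}V(y)=\int_{[0,\,\infty)}\bigg(\int_{[0,\,\infty)}f\big((x-u)-s\big)\,{\rm d}U(s)\bigg)\,{\rm d}G(u)\leq r_0\int_{[0,\,\infty)}{\rm d}G(u)\leq r_0,
$$
where the inner integral is $\leq r_0$ by Step 1 (applied with $x$ replaced by $x-u$). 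This proves \eqref{bound} with $r=r_0$.

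\emph{Step 3 (the half-line variants).} If $f$ is dRi on $[0,\infty)$, extend it by $0$ to all of $\mr$; the extension is still dRi on $\mr$ (the tiling intervals lying in $(-\infty,0)$ contribute nothing), and for $x\geq 0$ one has $f(x-y)=0$ for $y>x$, so $\int_{[0,\,x]}f(x-y)\,{\rm d}V(y)=\int_{[0,\,\infty)}f(x-y)\,{\rm d}V(y)$ and Steps 1--2 apply verbatim (in Step 2 the inner ${\rm d}U$-integral then runs over $[0,x-u]$ when $u\leq x$ and is empty otherwise). The case of $f$ dRi on $(-\infty,0]$ is symmetric: extend by $0$ on $(0,\infty)$, note that for $x\leq 0$ the measure ${\rm d}V$ is carried by $[0,\infty)\subseteq[x,\infty)$, whence $\int_{[x,\,\infty)}f(x-y)\,{\rm d}V(y)=\int_{[0,\,\infty)}f(x-y)\,{\rm d}V(y)$, and reduce as in Step 2 to $\sup_{w\leq 0}\int_{[0,\,\infty)}f(w-s)\,{\rm d}U(s)<\infty$, which follows from the same tiling argument restricted to $w\leq 0$.

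The only point demanding genuine care is the \emph{uniformity in $x$} of the estimate in Step 1: it is exactly here that direct Riemann integrability of $f$ (rather than mere Riemann or Lebesgue integrability) is used, and the interval of the tiling straddling the origin must be accounted for separately. Once Step 1 is secured, Steps 2 and 3 are routine applications of Tonelli's theorem and the convolution identity $V=U\ast G$.
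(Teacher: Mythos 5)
Your proof is correct, but it is organized differently from the paper's. The paper establishes the single preliminary estimate $V(x+y)-V(x)\leq U(y)$ for all $x,y\in\mr$ (equation \eqref{subad}, proved from $V(t)=\me\,U((t-\eta)^+)$ together with subadditivity and monotonicity of $U$), and then tiles ${\rm d}V$ directly: bounding $f(x)\leq\sum_{n\in\mz}\sup_{[n-1,n)}f\cdot\1_{[n-1,n)}(x)$ and using ${\rm d}V([x-n,\,x-n+1))\leq U(1)$ gives $\int_{[0,\infty)}f(x-y)\,{\rm d}V(y)\leq U(1)\sum_{n\in\mz}\sup_{[n-1,n)}f<\infty$ in one stroke. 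You instead first prove the uniform bound for ${\rm d}U$ by the same tiling device, and then transfer it to ${\rm d}V$ via the convolution identity $V=U\ast G$ and Tonelli's theorem; your treatment of the half-line cases also differs in flavor, reducing to the full-line statement by extending $f$ by zero rather than restating the integral with the appropriate range. Both routes land on the same uniform tiling estimate, which is the genuine content; the paper's version factors the work into the clean subadditivity-type bound \eqref{subad} once and for all (which is also reused elsewhere), while yours keeps the renewal function $U$ and the ``noise'' $G$ visibly separated, which some may find more transparent conceptually. Two minor remarks: the extra term $\sup_{t\in\mr}f(t)$ in your Step 1 bound is harmless but unnecessary, since the unit-interval tiling of $[0,\infty)$ already covers the mass at the origin; and when you invoke that the shifted upper Riemann sums of mesh $1$ are bounded uniformly in the offset, it would be worth one line to note that each shifted cell is contained in the union of two cells of the fixed grid, so the shifted sum is at most twice the fixed one -- this is exactly the point you flag as ``demanding genuine care.''
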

\begin{proof}
We only prove the claim under the assumption that $f$ is dRi on $\mr$. By virtue of the obvious inequality $$f(x)\leq \sum_{n\in\mathbb{Z}}\sup_{y\in [n-1, n)}f(y)\1_{[n-1,\,n)}(x),\quad x\in\mr$$ we obtain
\begin{multline*}
\int_{[0,\,\infty)} f(x-y){\rm d}V(y)\leq \int_{[0,\,\infty)}\sum_{n\in\mathbb{Z}}\sup_{y\in [n-1,\,n)}f(y)\1_{[n-1,\,n)}(x-y){\rm d}V(y)\\=
\sum_{n\in\mathbb{Z}}\sup_{y\in [n-1,\,n)}f(y)(V(x-n+1)-V(x-n))\leq  U(1) \sum_{n\in\mathbb{Z}}\sup_{y\in [n-1,\,n)}f(y)<\infty.
\end{multline*}
We have used \eqref{subad} for the penultimate inequality. Here, the series converges because $f$ is a dRi function on $\mr$.
\end{proof}

\begin{lemma}\label{eq:Borov}
Assume that the distribution of $\xi$ has an absolutely continuous component, $\me e^{\beta_1\xi}<\infty$ and $\me e^{\beta_2\eta}<\infty$ for some $\beta_1,\beta_2>0$. Then
\begin{equation}\label{eq:relat}
V(t)=\frac{t}{{\tt m}}+\Big(\frac{\me\xi^2}{2{\tt m}^{2}}-\frac{\me\eta}{{\tt m}}\Big)+T(t),\quad t\geq 0
\end{equation}
where ${\tt m}=\me\xi<\infty$ and $T$ satisfies
\begin{equation}\label{eq:relat2}
|T(t)|\leq C_0e^{-\beta_0 t},\quad t\geq 0
\end{equation}
for some $C_0>0$ and $\beta_0\in (0,\min(\beta_1,\beta_2))$. 
\end{lemma}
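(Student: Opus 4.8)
The plan is to reduce the statement to the corresponding second-order expansion for the ordinary renewal function $U$ and then transfer it to $V$ through the identity $V(t)=\int_{[0,\,t]}U(t-y){\rm d}G(y)$ from \eqref{equ}, the tails of $\eta$ being controlled by the hypothesis $\me e^{\beta_2\eta}<\infty$.

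First I would invoke the second-order renewal theorem with exponentially small remainder: since the distribution of $\xi$ has an absolutely continuous component it is spread out, and $\me e^{\beta_1\xi}<\infty$, so there exist $C_1>0$ and $\delta_1>0$ such that
\begin{equation}\label{eq:borovU}
|R(t)|\le C_1e^{-\delta_1 t},\quad t\ge 0,\qquad\text{where}\quad R(t):=U(t)-\frac{t}{{\tt m}}-\frac{\me\xi^2}{2{\tt m}^2}.
\end{equation}
This is classical; a self-contained derivation goes as follows. Extending $R$ by $0$ on $(-\infty,0)$, one checks from the renewal equation $U=\1_{[0,\infty)}+U\ast F$ (with $F$ the distribution of $\xi$ and $\overline F:=1-F$) that $R$ solves a renewal equation $R=z+R\ast F$ with $z(t):=-{\tt m}^{-1}t\,\overline F(t)+{\tt m}^{-1}\me[\xi\1_{\{\xi>t\}}]-\frac{\me\xi^2}{2{\tt m}^2}\overline F(t)$; a direct computation gives $\int_0^\infty z(s){\rm d}s=0$, and $\int_0^\infty e^{\gamma s}|z(s)|{\rm d}s<\infty$ for every $\gamma\in(0,\beta_1)$ because $\overline F(s)\le e^{-\beta_1 s}\me e^{\beta_1\xi}$ and $\me[\xi e^{\gamma\xi}]<\infty$. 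Then \eqref{eq:borovU} follows from the exponential-rate version of the key renewal theorem for spread-out distributions applied to $R=z\ast U$.

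Next I would substitute $U(s)=s/{\tt m}+\me\xi^2/(2{\tt m}^2)+R(s)$ into $V(t)=\int_{[0,\,t]}U(t-y){\rm d}G(y)$. Using $\int_{[0,\,t]}y\,{\rm d}G(y)=\me\eta-\me[\eta\1_{\{\eta>t\}}]$ and $G(t)-1=-\overline G(t)$ with $\overline G:=1-G$, this yields
\begin{equation}\label{eq:borovT}
T(t):=V(t)-\frac{t}{{\tt m}}-\Big(\frac{\me\xi^2}{2{\tt m}^2}-\frac{\me\eta}{{\tt m}}\Big)=-\frac{t}{{\tt m}}\overline G(t)+\frac{1}{{\tt m}}\me[\eta\1_{\{\eta>t\}}]-\frac{\me\xi^2}{2{\tt m}^2}\overline G(t)+\int_{[0,\,t]}R(t-y){\rm d}G(y).
\end{equation}
It then remains to bound each of the four terms in \eqref{eq:borovT} by a multiple of $e^{-\beta_0 t}$ uniformly in $t\ge 0$. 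By Chernoff's inequality $\overline G(t)\le e^{-\beta_2 t}\me e^{\beta_2\eta}$, and $\me[\eta\1_{\{\eta>t\}}]\le e^{-\beta_0 t}\me[\eta e^{\beta_0\eta}]$ for any $\beta_0\in(0,\beta_2)$; since $t\mapsto te^{-(\beta_2-\beta_0)t}$ is bounded on $[0,\infty)$, the first three terms are $O(e^{-\beta_0 t})$. For the last term I would split the integral at $t/2$: on $[0,t/2]$ use $|R(t-y)|\le C_1e^{-\delta_1 t/2}$ and $\int_{[0,\,t/2]}{\rm d}G\le 1$, on $(t/2,\,t]$ use $|R(t-y)|\le C_1$ and $\int_{(t/2,\,t]}{\rm d}G\le\overline G(t/2)\le e^{-\beta_2 t/2}\me e^{\beta_2\eta}$, obtaining a bound $C(e^{-\delta_1 t/2}+e^{-\beta_2 t/2})$. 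Choosing $\beta_0:=\tfrac12\min(\delta_1,\beta_1,\beta_2)$, which lies in $(0,\min(\beta_1,\beta_2))$ as required, gives $|T(t)|\le C_0e^{-\beta_0 t}$ for all $t\ge 0$.

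The only genuinely non-routine input is \eqref{eq:borovU}: the exponential decay of the renewal remainder really does use the spread-out hypothesis, entering through the fact that $|\widehat F|$ stays bounded away from $1$ on a punctured strip $\{-\delta_1<\Re s\le 0\}\setminus\{0\}$, so even the self-contained route above relies on the exponential-rate key renewal theorem, which is itself proved via an Esscher tilt and the Wiener–Hopf factorisation. Everything after \eqref{eq:borovU} is elementary, with the exponential tails of $\eta$ doing all the remaining work — which is precisely why $\me e^{\beta_2\eta}<\infty$ is assumed.
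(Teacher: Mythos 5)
Your proof is correct and takes essentially the same approach as the paper: the paper cites Theorem~3 on p.~246 of Borovkov (1976) for exactly the second-order expansion $U(t)={\tt m}^{-1}t+\me\xi^2/(2{\tt m}^2)+R(t)$ with $|R(t)|\le Ce^{-\beta t}$ that you re-derive via the renewal equation for $R$, and then substitutes it into $V=U\ast G$ precisely as you do. The only stylistic difference is in bounding the convolution term: the paper first shrinks $\beta$ to lie in $(0,\min(\beta_1,\beta_2))$ and then writes $\big|\int_{[0,t]}R(t-y)\,{\rm d}G(y)\big|\le Ce^{-\beta t}\int_{[0,t]}e^{\beta y}\,{\rm d}G(y)\le Ce^{-\beta t}\,\me e^{\beta\eta}$ in one line, whereas your split at $t/2$ is slightly more roundabout and costs a harmless factor of two in the exponent, which you then absorb by taking $\beta_0$ smaller.
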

\begin{proof}
Put $a:=\me\xi^2/(2{\tt m}^2)$. According to Theorem 3 on p.~246 in \cite{Borovkov:1976},
\begin{equation*}\label{eq:relat1}
U(t)={\tt m}^{-1} t+a+R(t),\quad t\geq 0,
\end{equation*}
where $|R(t)|\leq Ce^{-\beta t}$, $t\geq 0$ for some $C>0$ and $\beta\in (0, \beta_1)$. Using this 
we obtain
\begin{multline*}
V(t)=\int_{[0,\,t]}U(t-y){\rm d}G(y)=\int_{[0,\,t]}({\tt m}^{-1}(t-y)+a+R(t-y)){\rm d}G(y)={\tt m}^{-1}t+a-{\tt m}^{-1}\me\eta\\+{\tt m}^{-1}\int_t^\infty(1-G(y)){\rm d}y-a(1-G(t))+\int_{[0,\,t]}R(t-y){\rm d}G(y),
\end{multline*}
that is, \eqref{eq:relat} holds with $$T(t):={\tt m}^{-1}\int_t^\infty(1-G(y)){\rm d}y-a(1-G(t))+\int_{[0,\,t]}R(t-y){\rm d}G(y).$$ It remains to show that $T$ is exponentially bounded. To this end, we first note that manipulating with $\beta$ and $C$ we can ensure that $\beta\in (0,\beta_2)$. Then $\me e^{\beta \eta}<\infty$ and $1-G(t)\leq C_1 e^{-\beta t}$, $t\geq 0$ for some $C_1>0$. With this at hand, $$T(t)\leq {\tt m}^{-1}\int_t^\infty(1-G(y)){\rm d}y+\int_{[0,\,t]}R(t-y){\rm d}G(y)\leq (({\tt m}\beta)^{-1}C_1+C\me e^{\beta\eta})e^{-\beta t},\quad t\geq 0$$ and
$$T(t)\geq -a(1-G(t))+\int_{[0,\,t]}R(t-y){\rm d}G(y)\geq -(C_1a+C\me e^{-\beta\eta})e^{-\beta t},\quad t\geq 0.$$ Thus, \eqref{eq:relat2} holds with $\beta_0:=\beta$ and $C_0:=C\me e^{-\beta\eta}+C_1\max (({\tt m}\beta)^{-1}, a)$.
\end{proof}

\vspace{5mm}

\noindent {\bf Acknowledgement}. The present work was supported by National Research Foundation of Ukraine (project 2020.02/0014 'Asymptotic regimes of perturbed random walks: on the edge of modern and classical probability').


\begin{thebibliography}{99}

\footnotesize

\bibitem{Alsmeyer+Iksanov+Marynych:2017} G. Alsmeyer, A. Iksanov and A. Marynych, \textit{Functional limit theorems for the number of occupied boxes in the Bernoulli sieve}. Stoch. Proc. Appl. \textbf{127} (2017), 995--1017.


\bibitem{Bertoin:2008} J. Bertoin, \textit{Asymptotic regimes for the occupancy scheme of multiplicative cascades}. Stoch. Proc. Appl. \textbf{118} (2008), 1586--1605.

\bibitem{Borovkov:1976} A.~A. Borovkov, \textit{Stochastic processes in queuing theory}. Springer, 1976.

\bibitem{Buraczewski+Dovgay+Iksanov:2020} D. Buraczewski, B. Dovgay and A. Iksanov, \textit{On intermediate levels of nested occupancy scheme in random environment generated by stick-breaking I}. Electron. J. Probab. \textbf{25} (2020), paper no. 123, 24 pp.

\bibitem{Businger:2017} S. Businger, \textit{Asymptotics of the occupancy scheme in a random environment and its applications to tries}. Discrete Mathematics and Theoretical Computer Science \textbf{19} (2017), \#22.

\bibitem{Duchamps+Pitman+Tang:2017+} J.-J. Duchamps, J. Pitman and W. Tang, \textit{Renewal sequences and record chains related to multiple zeta sums}. Trans. Amer. Math. Soc. \textbf{371} (2019), 5731--5755.

\bibitem{Gnedin:2004} A.~V. Gnedin, \textit{The Bernoulli sieve}. Bernoulli \textbf{10} (2004), 79--96.

\bibitem{Gnedin+Iksanov:2020} A. Gnedin and A. Iksanov, \textit{On nested infinite occupancy scheme in random environment}. Probab. Theory Relat. Fields \textbf{177} (2020), 855--890.

\bibitem{Iksanov:2013} A. Iksanov, \textit{Functional limit theorems for renewal shot noise
processes with increasing response functions}. Stoch. Proc. Appl. \textbf{123} (2013), 1987--2010.

\bibitem{Iksanov:2016} A. Iksanov, \textit{Renewal theory for perturbed random walks and similar processes}. Birkh\"{a}user, 2016.

\bibitem{Iksanov+Jedidi+Bouzeffour:2017} A. Iksanov, W. Jedidi and F. Bouzeffour, \textit{A law of the iterated logarithm for the number of occupied boxes in the Bernoulli sieve}. Statist. Probab. Letters \textbf{126} (2017), 244--252.

\bibitem{Joseph:2011} A. Joseph, \textit{A phase transition for the heights of a fragmentation
tree}. Random Structures and Algorithms \textbf{39} (2011),
247--274.

\bibitem{Karlin:1967} S. Karlin, \textit{Central limit theorems for certain infinite
urn schemes}. J. Math. Mech. \textbf{17} (1967), 373--401.


\bibitem{Pitman+Tang:2017+} J. Pitman and W. Tang, \textit{Regenerative random permutations of integers}. Ann. Probab.
\textbf{47} (2019), 1378--1416.

\end{thebibliography}
\end{document}